\newtheorem{thm}{Theorem}[section]
\newtheorem{lem}[thm]{Lemma}
\newtheorem{defn}[thm]{Definition}
\newtheorem{prop}[thm]{Proposition}
\begin{document}
\title{\bf Blocks with abelian defect groups of rank $2$ and one simple module}
\author{Xueqin Hu}
\date{\small
School of Mathematics and Statistics, Central China Normal University,
Wuhan 430079, China}
\maketitle

\begin{abstract}

In this paper, we investigate the block that has an abelian defect group of rank $2$
and its Brauer correspondent has only one simple module.
We will get an isotypy between the block and its Brauer correspondent.
It will generalize the result of Kessar and Linckelmann (\cite{KL}).
\end{abstract}
\section{Introduction}
Let $p$ be a prime and
$\mathcal{O}$ a complete discrete valuation ring
having an algebraically closed residule field $k$ of characteristic $p$
and a quotient field $\mathcal{K}$ of characteristic $0$.
We will always assume that $\mathcal{K}$ is big enough for the finite groups below.

Let $G$ be a finite group and $b$ a block of $\mathcal{O}G$
with a defect group $P$.
Denote by $\mathrm{Irr}_\mathcal{K}(G,b)$ and
$\mathrm{IBr}(G,b)$
the set of irreducible ordinary characters in $b$
and the set of irreducible Brauer characters in $b$ respectively.
Set $l_G(b)=|\mathrm{IBr}(G,b)|$.
Let $c$ be the Brauer correspondent of $b$ in $N_G(P)$.
In \cite{KL},
Kessar and Linckelmann investigated the block $b$
under the assumptions that
$l_{N_G(P)}(c)=1$ and $P$ is elementary abelian of rank $2$.
They showed that
the inertial quotient of $b$ is abelian
and there is an isotypy between $b$ and $c$
all of whose signs are positive.

In this note,
we will generalize these results to the blocks with defect groups of rank $2$.

\begin{thm}\label{MT}
  Keep the notation as above.
  Assume that $P$ is abelian of rank $2$
  and $l_{N_G(P)}(c)=1$.
  Then the inertial quotient of $b$ is abelian
  and
  there is an isotypy between $b$ and $c$.
\end{thm}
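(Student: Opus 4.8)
The plan is to split the statement into its two assertions---abelianness of the inertial quotient and the existence of an isotypy---and to treat the first by a reduction to the linear group $GL_2(\mathbb{F}_p)$ and the second by an inductive construction of compatible perfect isometries in the spirit of Usami and Puig. Throughout, the governing principle is that because $P$ is abelian the whole problem is controlled by the inertial quotient $E$ and its action on $P$.

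First I would record the local structure. Since $P$ is abelian, it is the unique $\mathcal{F}$-centric subgroup of the fusion system $\mathcal{F}$ of $b$ (for any $Q\le P$ one has $C_P(Q)=P$, so $Q$ is centric only if $Q=P$); hence $\mathcal{F}$ is controlled, is determined by the action of the inertial quotient $E=\mathrm{Out}_{\mathcal{F}}(P)$ on $P$, and coincides with the fusion system of $c$. The quotient $E$ is a $p'$-group, so a coprime-action argument shows that $E$ acts faithfully on $P/\Phi(P)\cong\mathbb{F}_p^2$; hence $E$ embeds in $GL_2(\mathbb{F}_p)$, whether or not $P$ is elementary abelian. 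On the normalizer side, $c$ has the normal abelian defect group $P$, so by Külshammer's theorem $\mathcal{O}N_G(P)c$ is Morita equivalent to a twisted group algebra $\mathcal{O}_\alpha(P\rtimes E)$ for a suitable class $\alpha\in H^2(E,k^\times)$; because $P$ is a $p$-group it lies in the kernel of every simple module, so $l_{N_G(P)}(c)$ equals the number of simple $k_\alpha E$-modules.

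Next I would deduce that $E$ is abelian. As $E$ is a $p'$-group the algebra $k_\alpha E$ is semisimple, and $l_{N_G(P)}(c)=1$ forces $k_\alpha E\cong M_n(k)$ with $n^2=|E|$; that is, $E$ is of central type for the class $\alpha$. Lifting the faithful two-dimensional representation to characteristic $0$ exhibits $E$ as a finite subgroup of $GL_2(\mathbb{C})$, whose image in $PGL_2(\mathbb{C})$ is cyclic, dihedral, or one of $A_4,S_4,A_5$ (Dickson). The central-type condition requires $|E|$ to be a square together with a fully ramified irreducible projective character of degree $\sqrt{|E|}$, and running through the Dickson list one checks that only the case of cyclic image---where $E$ is abelian---can meet these numerical constraints, the nonabelian families failing either the square-order requirement or the degree requirement. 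This classification step is where I would be most careful, since it must be carried out uniformly for homocyclic and non-homocyclic $P$; the essential point making this uniform is that in every case $E$ has already been forced into $GL_2(\mathbb{F}_p)$.

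Finally, for the isotypy I would invoke the inductive construction of perfect isometries of Usami--Puig type. With $E$ abelian and the fusion controlled, the generalized decomposition numbers of $b$ are governed entirely by the action of $E$ on $\mathrm{Irr}(P)$ and by the central-type datum $\alpha$, exactly as for $c$; this lets one build an $\mathcal{O}$-linear bijective isometry $\mu:\mathbb{Z}\,\mathrm{Irr}_{\mathcal{K}}(G,b)\to\mathbb{Z}\,\mathrm{Irr}_{\mathcal{K}}(N_G(P),c)$ and, for each $u\in P$, a local isometry $\mu_u$ at the Brauer pair determined by $u$. The main obstacle is to verify that these maps assemble into an isotypy---namely that each $\mu_u$ is a perfect isometry and that the squares relating the $\mu_u$ through the decomposition and generalized-decomposition maps commute---and to control the signs. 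I expect this to reduce, via the controlled fusion and the abelian central-type structure of $E$, to the cases already settled in \cite{KL} for elementary abelian $P$, together with the coprime identification $P\to P/\Phi(P)$ that matches up the relevant characters and signs; establishing this compatibility, rather than the group theory, is the crux of the argument.
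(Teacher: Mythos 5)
Your first half (abelianness of $E$) is essentially sound and takes a genuinely different route from the paper. The paper (Lemma 2.1) reduces block-theoretically: it passes to $\bar{H}=N_G(P,b_P)/\Phi(P)$, checks that $\bar{b}_P$ is a block of $\bar{H}$ with defect group $\bar{P}\cong C_p\times C_p$ and that $C_{\bar{H}}(\bar{P})=\bar{C}_G(P)$, and then quotes Kessar--Linckelmann's Proposition 5.2 as a black box. You instead transport only the group theory: $E$ acts faithfully on $P/\Phi(P)$ by coprime action, so $E\le GL_2(\mathbb{F}_p)$, and you rerun the Dickson/central-type analysis that underlies the KL proposition. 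That is viable, but be aware that ``running through the Dickson list one checks'' is the entire content of KL's Proposition 5.2 and is not a routine check: excluding an $A_5$-image needs solvability of groups of central type (Howlett--Isaacs) or a hands-on cocycle argument, and each of the dihedral, $A_4$, $S_4$ cases needs an argument that no cocycle makes the twisted group algebra simple. Note also that the paper needs more than abelianness downstream: via DeMeyer--Janusz it obtains $E=E_1\times E_2$ with $E_1\cong E_2$ cyclic, acting diagonally on $P=P_1\times P_2$ (Lemma 2.2), and this decomposition drives the whole case analysis later; your sketch omits it.

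The isotypy half has a genuine gap. The proposed reduction ``to the cases already settled in KL $\ldots$ together with the coprime identification $P\to P/\Phi(P)$'' is not available: $\Phi(P)$ is normal in $N_G(P,b_P)$ but not in $G$, so there is no quotient of $G$ (nor of the local subgroups $C_G(Q)$ for general $Q$) whose block theory realizes the elementary abelian situation; the Frattini-quotient trick is confined to the normalizer, which is exactly why the paper uses it only in Lemma 2.1. Likewise the claim that ``the generalized decomposition numbers of $b$ are governed entirely by the action of $E$ on $\mathrm{Irr}_\mathcal{K}(P)$ and by the central-type datum $\alpha$'' is unjustified --- knowing the fusion system of $b$ does not determine its character-theoretic data (otherwise Brou\'{e}'s abelian-defect isotypy conjecture would be immediate); this assertion is essentially the theorem you are trying to prove. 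What actually must be done (the paper's Section 3, Theorem 3.3) is the inductive verification of the Puig--Usami extension criterion at every $Q\le P$, split according to $C_E(Q)\in\{1,\,E_i,\,E\}$. The case $C_E(Q)=E$ is indeed a KL-style counting argument using the degrees $l$ and $l^2$ from Lemma 2.3; but the case $C_E(Q)=E_i$ lies genuinely outside KL: there the block $b_Q$ of $C_G(Q)$ has defect group $P$ of full order with cyclic inertial quotient $E_1$, hence $l$ simple modules rather than one, and the paper must invoke Watanabe's theorems on blocks with cyclic hyperfocal subgroup, a block-extension lemma built on Dade and K\"{u}lshammer, Puig's theorem on extensions of nilpotent blocks, and Robinson's free-action result to produce the required $E_2$-stable extension of the isometry. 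None of this is addressed, or replaceable, by the elementary abelian case plus fusion control, so as it stands the second half of your proposal does not constitute a proof.
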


These results are well-known when either $p$ is $2$ or
the inertial quotient of $b$ is trivial.
Therefore,
we may assume that $p$ is odd
and
the inertial quotient of $b$ is non-trivial throughout this paper.

\section{The structure of the block $c$}
Keep the notation as above.
In this section,
we will investigate the structure of the inertial quotient of $b$
and irreducible ordinary characters of the block $c$.

Given a positive integer $a$,
denote by $C_a$ the cyclic group of order $a$.
We will use $[-\,\ ,\,\ -]$ to represent the commutator.
Assume that $P=C_{p^n}\times C_{p^m}$ for some positive integers $n,m$.
We will fix a maximal $b$-Brauer pair $(P,b_P)$.
For any $Q\leq P$,
denote by $(Q,b_Q)$ the unique $b$-Brauer pair contained in $(P,b_P)$.
Let $E$ be the inertial quotient of $b$ associated with $(P,b_P)$,
namely,
$E=N_G(P,b_P)/C_G(P)$.

\begin{lem}\label{E is abelian}
  The inertial quotient $E$ is abelian if $l_{N_G(P)}(c)=1$.
\end{lem}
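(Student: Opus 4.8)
The plan is to pin $E$ down as a $p'$-subgroup of a single small linear group and then read off commutativity from the hypothesis $l_{N_G(P)}(c)=1$. First I would record that $E$ is a $p'$-group: since $P$ is abelian we have $P\leq C_G(P)$, so $E=N_G(P,b_P)/C_G(P)$ is the usual inertial quotient, which is a $p'$-group. As $E$ acts faithfully on $P$ and $p\nmid|E|$, coprime action on the abelian $p$-group $P$ forces $E$ to act faithfully already on $\Omega_1(P)=P[p]\cong C_p\times C_p$; hence $E$ embeds into $\mathrm{Aut}(\Omega_1(P))=GL_2(p)$. In particular the analysis of $E$ is identical to the elementary abelian rank $2$ situation treated in \cite{KL}, and the passage from $C_p\times C_p$ to $C_{p^n}\times C_{p^m}$ costs nothing at the level of the inertial quotient.

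Next I would exploit the structure of the Brauer correspondent. Since $P\trianglelefteq N_G(P)$, the block $c$ has a normal abelian defect group, so by K\"ulshammer's theorem on blocks with normal defect group its source algebra is a twisted group algebra $\mathcal{O}_\alpha[P\rtimes E]$ for some $\alpha\in H^2(E,k^\times)$, and the simple modules of $c$ correspond to the simple modules of $k_\alpha E$ (the normal $p$-subgroup $P$ acting trivially). Thus $l_{N_G(P)}(c)$ equals the number of isomorphism classes of simple $k_\alpha E$-modules. Because $E$ is a $p'$-group, $k_\alpha E$ is semisimple, so the hypothesis $l_{N_G(P)}(c)=1$ gives $k_\alpha E\cong M_d(k)$ with $d^2=\dim_k k_\alpha E=|E|$; that is, $E$ is of central type with respect to $\alpha$.

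It then remains to show that a central-type $p'$-subgroup of $GL_2(p)$ is abelian, and here I would split according to the natural module $V=\Omega_1(P)\cong\mathbb{F}_p^2$. If $V$ is reducible as an $\mathbb{F}_pE$-module then, $E$ being a $p'$-group, complete reducibility writes $V$ as a sum of two lines, so $E$ lies in the diagonal torus $C_{p-1}\times C_{p-1}$ and is abelian; if $V$ is irreducible over $\mathbb{F}_p$ but not absolutely irreducible then $\mathrm{End}_{\mathbb{F}_pE}(V)\cong\mathbb{F}_{p^2}$ embeds $E$ into a nonsplit torus $\cong C_{p^2-1}$, so $E$ is cyclic and again abelian. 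The only remaining possibility is that $E$ acts absolutely irreducibly, in which case $E$ is necessarily non-abelian (an abelian group has no faithful $2$-dimensional absolutely irreducible representation), and this is exactly the case that must be excluded.

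I expect this exclusion to be the main obstacle. Using that central type forces $E$ to be solvable (Howlett--Isaacs) one reduces the absolutely irreducible $p'$-subgroups of $GL_2(p)$ to those whose image in $PGL_2(p)$ is dihedral, $A_4$ or $S_4$, and one must then check directly --- as in \cite{KL} --- that none of these carries a cocycle $\alpha$ with $k_\alpha E\cong M_{\sqrt{|E|}}(k)$; the available levers are the requirement that $|E|$ be a perfect square together with the condition that the only $\alpha$-regular class of $E$ be $\{1\}$ (equivalently, that the commutator pairing attached to $\alpha$ be nondegenerate). Combining the three cases yields that $E$ is abelian.
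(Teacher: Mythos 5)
Your argument is correct in substance, but it reaches the conclusion by a genuinely different route than the paper does. The paper never mentions cocycles or central type: it sets $H=N_G(P,b_P)$, passes to the quotient $\bar H=H/\Phi(P)$, checks that $\bar b_P$ is a block of $\bar H$ with defect group $\bar P\cong C_p\times C_p$ and one simple module, verifies $C_{\bar H}(\bar P)=\bar C_G(P)$ by the same coprime-action principle you invoke (a $p'$-group acting trivially on $P/\Phi(P)$ acts trivially on $P$; you use the dual statement for $\Omega_1(P)$), and then quotes \cite[Proposition 5.2]{KL} wholesale for the quotient block. You instead apply K\"ulshammer's normal-defect-group theorem directly to $c$, convert the hypothesis $l_{N_G(P)}(c)=1$ into the statement that $k_\alpha E$ is a matrix algebra (so $E$ is of central type), embed $E$ into $GL_2(p)$ via its faithful action on $\Omega_1(P)$, and run the trichotomy reducible / irreducible-not-absolutely / absolutely irreducible. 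The trade-off: the paper's reduction needs block-theoretic bookkeeping (blocks, defect groups, $l$ and the inertial quotient all survive the quotient by $\Phi(P)$) but keeps every cohomological difficulty inside the citation; your reduction is more transparent and uses the hypothesis exactly once, but it re-opens the case analysis that is the hard core of Kessar--Linckelmann's proposition. That final step --- excluding absolutely irreducible $E$ with image dihedral, $A_4$ or $S_4$ in $PGL_2(p)$ via Howlett--Isaacs, square order and $\alpha$-regular classes --- you only sketch, and it is genuinely nontrivial. This is not a fatal gap, however, because the statement you need at exactly that point, namely that a $p'$-subgroup of $GL_2(p)$ of central type is abelian, is precisely \cite[Proposition 5.2]{KL}; citing it once you have established central type and the embedding $E\hookrightarrow GL_2(p)$ closes your proof and renders the trichotomy and the case-by-case exclusion unnecessary.
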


\begin{proof}
  Let $\Phi(P)$ be the Frattini subgroup of $P$.
  So $P/\Phi(P)$ is $C_p\times C_p$.
  Set $H$ to be $N_G(P,b_P)$.
  Then $\Phi(P)\unlhd H$
  and denote $H/\Phi(P)$ by $\bar{H}$.
  For any subset $X$ of $\mathcal{O}H$,
  $\bar{X}$ denotes the image of $X$ under the canonical map
  $\mathcal{O}H\longrightarrow\mathcal{O}\bar{H}$.

  Since $l_H(b_P)=1$,
  $l_{\bar{H}}(\bar{b}_P)=1$
  and
  $\bar{b}_P$ is a block of $\bar{H}$ with defect group
  $\bar{P}=C_p\times C_p$.
  Let $\hat{C}$ be the subgroup of $H$
  such that
  $\hat{C}/\Phi(P)=C_{\bar{H}}(\bar{P})$.
  Hence,
  $\hat{C}=\{x\in H~|~[P,x]\subseteq\Phi(P)\}$.
  It is clear that
  $P=[P,\hat{C}]\times C_P(\hat{C})$.
  So $P=C_P(\hat{C})$ since
  $[P,\hat{C}]\leq\Phi(P)$.
  This means $C_{\bar{H}}(\bar{P})=\bar{C}_G(P)$.
  Hence,
  $(\bar{P},\bar{b}_P)$ is a maximal $\bar{b}_P$-Brauer pair
  of $\mathcal{O}\bar{H}\bar{b}_P$.
  By \cite[Proposition 5.2]{KL},
  $N_{\bar{H}}(\bar{P},\bar{b}_P)/C_{\bar{H}}(\bar{P})$ is abelian.
  It is evident that $E$ is isomorphic to $N_{\bar{H}}(\bar{P},\bar{b}_P)/C_{\bar{H}}(\bar{P})$.
  We are done.
\end{proof}

By \cite[Lemma 2]{DJ} and
the structure of blocks with normal defect groups,
$E$ is a direct product of two isomorphic groups.
Next,
we will show that $E$ acts diagonally on $P$.
This can be deduced from the following general fact.

\begin{lem}\label{diagonal action}
  Let $D$ be an abelian $p$-group of rank $2$
  and
  $F\leq\mathrm{Aut}(P)$ an abelian $p^\prime$-group
  which is a direct product of two isomorphic subgroups.
  Then we have the decompositions
  $F=F_1\times F_2$ and $D=D_1\times D_2$ such that
  $F_1$ acts faithfully on $D_1$ and centralises $D_2$
  and
  $F_2$ acts faithfully on $D_2$ and centralises $D_1$
  and
  $F_1\cong F_2$.
  In particular,
  $F_1$ and $F_2$ are cyclic groups of order dividing $(p-1)$.
\end{lem}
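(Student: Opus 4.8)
The plan is to push everything down to the Frattini quotient $V:=D/\Phi(D)\cong\mathbb{F}_p^2$, settle the problem there, and then lift the decomposition back to $D$ by coprime action. First I would record that, since $F$ is a $p^\prime$-group of automorphisms of the $p$-group $D$, a theorem of Burnside forces $F$ to act faithfully on $V$; hence $F$ embeds as an abelian $p^\prime$-subgroup of $GL(V)\cong GL_2(\mathbb{F}_p)$. As $|F|$ is invertible in $k$, the algebra $\mathbb{F}_pF$ is semisimple, so $V$ is either a direct sum $V=V_1\oplus V_2$ of two one-dimensional $F$-modules or an irreducible two-dimensional $F$-module. I would dispose of the irreducible case at once: if $V$ is irreducible then, $F$ being abelian, Schur's lemma makes $\mathrm{End}_{\mathbb{F}_pF}(V)$ a field, necessarily $\mathbb{F}_{p^2}$, so $F\hookrightarrow\mathbb{F}_{p^2}^\times$ is cyclic. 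But a direct product $A\times A$ is cyclic only when $A$ is trivial, and $F=1$ makes the lemma vacuous; so we may assume $V=V_1\oplus V_2$.

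In the split case $F$ embeds into $C_{p-1}\times C_{p-1}$ via the two characters $\chi_1,\chi_2\colon F\to\mathbb{F}_p^\times$ recording its action on $V_1$ and $V_2$. In particular $F$ is $2$-generated, so the hypothesis $F\cong A\times A$ forces $A$ to be cyclic, say $A\cong C_r$, whence $F\cong C_r\times C_r$ has exponent $r$. The combinatorial heart is then to put $F_1:=\ker\chi_2$ and $F_2:=\ker\chi_1$ and to prove $F=F_1\times F_2$. Faithfulness on $V$ gives $\ker\chi_1\cap\ker\chi_2=1$, so $F_1\cap F_2=1$ and, via $\chi_1$ and $\chi_2$ respectively, each $F_i$ is cyclic. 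For the orders, $|F_1|=|F|/|\mathrm{im}\,\chi_2|$, and $\mathrm{im}\,\chi_2$ is a cyclic quotient of $F$, so its order divides $\exp(F)=r$, giving $|F_1|\ge r$; on the other hand $F_1$ is a cyclic subgroup of $C_r\times C_r$, so $|F_1|\mid r$. Hence $|F_1|=|F_2|=r$, so $F_1\cong F_2\cong C_r$ and $|F_1F_2|=r^2=|F|$, i.e.\ $F=F_1\times F_2$. By construction $F_1$ acts faithfully on $V_1$ and trivially on $V_2$, and symmetrically for $F_2$.

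Finally I would lift to $D$. Coprime action of $F_1$ on the abelian $p$-group $D$ yields $D=[D,F_1]\times C_D(F_1)$; set $D_1:=[D,F_1]$ and $D_2:=C_D(F_1)$. Reducing modulo $\Phi(D)$, the summand $D_1$ surjects onto $[V,F_1]=V_1$ while $D_2=C_D(F_1)$ maps into $C_V(F_1)=V_2$, so $D_1$ and $D_2$ have images $V_1$ and $V_2$ and are therefore both cyclic of rank $1$. Since a $p^\prime$-automorphism of a cyclic $p$-group that is trivial on the Frattini quotient is trivial, the relations $\chi_1|_{F_2}=1$ and $\chi_2|_{F_1}=1$ upgrade to: $F_2$ centralises $D_1$ and $F_1$ centralises $D_2$; dually, faithfulness of $F_1$ on $V_1$ and of $F_2$ on $V_2$ upgrades to faithful action on $D_1$ and $D_2$ respectively. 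As each $F_i$ then embeds into the $p^\prime$-part of the automorphism group of a cyclic $p$-group, which is $C_{p-1}$, we obtain $|F_i|\mid(p-1)$, completing the ``in particular'' clause.

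I expect the lifting step to be the main obstacle: a splitting of the \emph{semisimple} module $V$ need not descend to a compatible direct decomposition of the \emph{non-semisimple} group $D$, and one must genuinely check that $F_2$ acts trivially on $[D,F_1]$ rather than merely on its image in $V$. The rank-$2$ hypothesis is exactly what rescues this, for it forces the two coprime-action summands to be cyclic, and for a cyclic $p$-group both triviality and faithfulness of a $p^\prime$-action are controlled entirely by the Frattini quotient.
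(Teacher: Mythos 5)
Your proof is correct, but it follows a genuinely different route from the paper's. The paper argues by induction on $|D|$: the base case $C_p\times C_p$ is quoted from \cite[Proposition 5.3]{KL}, the quotient decomposition of $D/\Phi(D)$ is pulled back to \emph{preimages} $D_1,D_2\supseteq\Phi(D)$ (which is not yet a direct decomposition), and this is then repaired through a case split on whether $\Phi(D)$ is cyclic or of rank $2$, with commutator computations in the first case and the induction hypothesis in the second. You instead avoid induction entirely: you reprove the elementary abelian case from scratch (Maschke plus Schur over $\mathbb{F}_p$ to rule out the irreducible module, then the kernels $F_1=\ker\chi_2$, $F_2=\ker\chi_1$ with the exponent/order count to get $F=F_1\times F_2$), and you lift in a single step by taking $D_1=[D,F_1]$, $D_2=C_D(F_1)$, which coprime action makes a direct decomposition at once; Burnside's theorem (the kernel of $\mathrm{Aut}(D)\to\mathrm{Aut}(D/\Phi(D))$ is a $p$-group) then upgrades both triviality and faithfulness from the Frattini quotient to $D$, exactly because the rank-$2$ hypothesis forces each summand to be cyclic --- your closing remark correctly identifies this as the crux. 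Your version buys self-containedness and eliminates both the induction and the case analysis; the paper's version buys direct reuse of \cite{KL} and keeps the same inductive shape as its source. Two small points to tighten: the phrase ``$F=1$ makes the lemma vacuous'' should read ``trivially true'' (and note that in that event $V$ cannot be irreducible anyway, so the irreducible case genuinely never occurs); and when you say $C_D(F_1)$ ``maps into'' $C_V(F_1)$, you should say it maps \emph{onto} it --- this is the standard fact that coprime fixed points are compatible with quotients --- together with $\Phi(D_1\times D_2)=\Phi(D_1)\times\Phi(D_2)$, so that $D_i/\Phi(D_i)\cong V_i$ as $F$-modules and the cyclicity of $D_1,D_2$ follows. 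Neither point is a gap.
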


\begin{proof}
  We will exhibit it by induction on $|D|$.
  When $D$ is elementary abelian,
  it is actually done in \cite[Proposition 5.3]{KL}.
  We may assume that $n\geq 2$ or $m\geq 2$.
    Let $\Phi(D)$ be the Frattini subgroup of $D$.
  So $D/\Phi(D)$ is $C_p\times C_p$.
  Let $\pi$ be the canonical map
  from $F$ to $\mathrm{Aut}(D/\Phi(D))$.
  For any subset $X$ of $F$,
  $\bar{X}$ denotes the image of $X$ under $\pi$.
   It is clear that $\pi$ is injective.
  So there exist
  two subgroups $F_1$ and $F_2$ of $F$ and
  two subgroups $D_1$ and $D_2$ of $D$ containing $\Phi(D)$
  satisfying the properties
  $\bar{F}=\bar{F}_1\times \bar{F}_2$ and
  $D/\Phi(D)=D_1/\Phi(D)\times D_2/\Phi(D)$ and
  $\bar{F}_1$ acts faithfully on $D_1/\Phi(D)$ and centralises $D_2/\Phi(D)$
  and
  $\bar{F}_2$ acts faithfully on $D_2/\Phi(D)$ and centralises $D_1/\Phi(D)$
  and
  $\bar{F}_1\cong \bar{F}_2$.
  Hence,
  $D_1$ and $D_2$ are $F$-stable
  and
  they fulfill

  (i) $D_1=[D_1,F_1]\cdot\Phi(D)$ and $[D_1,F_2]\subseteq\Phi(D)$
      and $F_1$ acts faithfully on $D_1$;

  (ii) $D_2=[D_2,F_2]\cdot\Phi(D)$ and $[D_2,F_1]\subseteq\Phi(D)$
      and $F_2$ acts faithfully on $D_2$;

  (iii) $D_1\cap D_2=\Phi(D)$ and $D_1/\Phi(D)\cong C_p\cong D_2/\Phi(D)$
      and $D=D_1\cdot D_2$.
  \vspace{0.5mm}

  Suppose that $\Phi(D)$ is cyclic.
  Then $D=C_p\times C_{p^m}$ with $m\geq 2$
  and $\Phi(D)=C_{p^m-1}$.
  Since $D_2=[D_2,F_1]\times C_{D_2}(F_1)$ and
  $[D_2,F_1]\subseteq\Phi(D)$,
  $\Phi(D)=[D_2,F_1]\times C_{\Phi(D)}(F_1)$.
  Then
  either $[D_2,F_1]=1$
  or
  $C_{\Phi(D)}(F_1)=1$
  by the assumption that $\Phi(D)$ is cyclic.
  If $[D_2,F_1]=1$,
  then
  $\Phi(D)\leq D_2\leq C_D(F_1)$.
  Clearly,
  $D=[D,F_1]\times C_D(F_1)$
  and
  $D_2$ is a maximal subgroup of $D$.
  Thus,
  $D_2=C_D(F_1)$ and
  $[D,F_1]=[D_1,F_1]$.
  Since $F_1$ and $F_2$ commute with each other
  and
  $[D_1,F_2]\subseteq\Phi(D)\subseteq C_D(F_1)$,
  $[[D_1,F_1],F_2]=1$.
  So
  $[D_1,F_1]\leq C_P(F_2)$.
  Since $F_1$ acts faithfully on $D_1$
  and $D_1=[D_1,F_1]\times C_{D_1}(F_1)$,
  $F_1$ acts faithfully on $[D_1,F_1]$.
  Thus,
  the decompositions
  $F=F_1\times F_2$
  and
  $D=[D_1,F_1]\times D_2$
  are what we want.
  We may assume that
  $C_{\Phi(D)}(F_1)=1$.
  Then
  $\Phi(D)=[D_2,F_1]$ and
  $D_2=\Phi(D)\times C_{D_2}(F_1)$.
  If $C_{\Phi(D)}(F_2)=1$,
  we can get
  $\Phi(D)=[D_1,F_2]$
  and
  $D_1=\Phi(D)\times C_{D_1}(F_2)$ similarly.
  Then
  $D=C_{D_1}(F_2)\times C_{D_2}(F_1)$ which is impossible.
  So $C_{\Phi(D)}(F_2)\neq1$.
  Then replacing $D_2$ by $D_1$ in the previous argument,
  we can obtain the decompositions that we need.

  Suppose $\Phi(D)$ is of rank $2$.
  Then both $D_1$ and $D_2$ are of rank $2$.
  Let $K$ be subgroup of $F$ consisting of automorphisms
  acting trivially on $D_1$.
  Then
  $D=[D,K]\times C_D(K)$
  and
  $D_1\leq C_D(K)$.
  Hence,
  $K$ has to be trivial
  since $D_1$ has rank $2$.
  This means $F$ acts faithfully on $D_1$.
  By induction,
  we have
  $D_1=D_{11}\times D_{12}$ and
  $F=F_{11}\times F_{12}$ such that
  $F_{11}$ acts faithfully on $D_{11}$ and
  centralises $D_{12}$
  and
  $F_{12}$ acts faithfully on $D_{12}$ and
  centralises $D_{11}$
  and
  $F_{11}\cong F_{12}$.
  Then
  $D=[D,F_{11}]\times C_{D}(F_{11})$
  and
  $D_{11}=[D_{11},F_{11}]\leq[D,F_{11}]$
  and
  $D_{11}\leq C_{D}(F_{12})$.
  In particular,
  $C_{[D,F_{11}]}(F_{12})\neq 1$.
  But
  $[D,F_{11}]$ is cyclic.
  Then
  $[D,F_{11}]\leq C_D(F_{12})$
  and moreover
  $C_D(F_{12})=[D,F_{11}]\times(C_D(F_{11})\cap C_D(F_{12}))$.
  But
  $C_D(F_{12})$ is also cyclic.
  We have
  $[D,F_{11}]=C_D(F_{12})$.
  Similarly,
  we can prove that
  $[D,F_{12}]=C_D(F_{11})$.
  Then the decompositions
  $D=[D,F_{11}]\times [D,F_{12}]$
  and
  $F=F_{11}\times F_{12}$
  are what we want.
  We are done.
\end{proof}

Hence,
by Lemma \ref{diagonal action},
we have
$E=E_1\times E_2$ and
$P=P_1\times P_2$ such that

(i) $E_1$ acts faithfully on $P_1$ and centralises $P_2$;

(ii) $E_2$ acts faithfully on $P_1$ and centralises $P_1$;

(iii) $E_1\cong E_2$ are cyclic groups of order $l$,
which $l$ is a positive integer dividing $(p-1)$.

We can easily describe the source algebra of the block $c$
by the structure theory of blocks with normal defect groups
and the structure  of inertial quotient $E$.
It is well-known that there exists a central extension

$$\xymatrix@C=0.5cm{
  1 \ar[r] & Z \ar[r] & \tilde{E} \ar[r] & E \ar[r] & 1 }$$
  with
  $Z$ cyclic $p^\prime$-group
  such that
  there is an irreducible ordinary character $\theta$ of $Z$
  which is covered by a unique irreducible character of $\tilde{E}$.
  Let $e_\theta\in\mathcal{O}Z$ be the central idempotent
  corresponding to $\theta$.
  Set $N=P\rtimes\tilde{E}$.
  Then $\mathcal{O}Ne_\theta$ is the source algebra of the block $c$.
  Note that $e_\theta$ is still a block of $C_N(R)$ for any $R\leq P$.
  The following lemma gives some information about the degrees and number
  of irreducible ordinary characters of $\mathcal{O}Ne_\theta$,
  which is similar with \cite[Proposition 5.3]{KL}.
  We will skip the proof.

  \begin{lem}\label{irreducible character of c}
    Set $A$ to be $\mathcal{O}Ne_\theta$.
    Then the degree of an element of $\mathrm{Irr}_\mathcal{K}(A)$
    is either $l$ or $l^2$ and
    $\mathrm{Irr}_\mathcal{K}(A)$ has
    $p^n+p^m-1$ elements of degree $l$
    and
    $\frac{p^n-1}{l}\cdot\frac{p^m-1}{l}$
    elements of degree $l^2$.
  \end{lem}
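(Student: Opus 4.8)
The plan is to compute $\mathrm{Irr}_{\mathcal{K}}(A)$ by ordinary Clifford theory relative to the abelian normal subgroup $P\trianglelefteq N$, after first passing to a twisted group algebra of $\bar N=P\rtimes E$. Since $\theta$ is a linear character of the central $p^\prime$-subgroup $Z$ covered by a unique irreducible character of $\tilde E$, I would first record that $\mathcal{K}\tilde E e_\theta$ is a twisted group algebra $\mathcal{K}_\gamma E$ for a cocycle $\gamma\in H^2(E,\mathcal{K}^\times)$ representing the class of the extension (via $\theta$); inflating along $P\rtimes\tilde E\to P\rtimes E$ then gives $\mathcal{K}Ne_\theta\cong\mathcal{K}_\gamma(P\rtimes E)$, where $\gamma$ is trivial on $P$ and on the $P$--$E$ cross terms. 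Thus $\mathrm{Irr}_{\mathcal{K}}(A)$ is identified with the irreducible modules of $\mathcal{K}_\gamma(P\rtimes E)$, and $Z$ plays no further role (a dimension count gives $\dim_{\mathcal{K}}A=p^{n+m}l^2$).

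Next I would translate the hypothesis $l_{N_G(P)}(c)=1$. As $c$ is source-algebra equivalent to $A$, which is a block with normal defect group $P$ and inertial quotient $E$ carrying $\gamma$, the number of its simple modules equals the number of irreducible $\gamma$-projective representations of $E$; that this number is $1$ forces $\mathcal{K}_\gamma E$ to be simple, hence $\mathcal{K}_\gamma E\cong\mathrm{Mat}_l(\mathcal{K})$, so $E$ has a single irreducible $\gamma$-representation, of degree $l$. Because $E_1$ and $E_2$ are cyclic we have $H^2(E_i,\mathcal{K}^\times)=0$, so $\gamma$ restricts to a coboundary on each $E_i$; consequently each $E_i$ has exactly $l$ irreducible $\gamma$-projective representations, all of degree $1$. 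This is the point that mirrors \cite[Proposition 5.3]{KL}.

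Then I would run the Clifford analysis over $P$. Writing $\mathrm{Irr}(P)=\mathrm{Irr}(P_1)\times\mathrm{Irr}(P_2)$, the group $E=E_1\times E_2$ acts diagonally, $E_1$ moving only the first factor and $E_2$ only the second. Since $E_1$ acts faithfully on $P_1$ with $l\mid p-1$, its generator acts on $\mathrm{Irr}(P_1)\cong C_{p^n}$ by an exponent $a\not\equiv 1\pmod p$, so $E_1$ fixes only the trivial character and is free on the remaining $p^n-1$ characters, giving $(p^n-1)/l$ regular orbits; symmetrically for $E_2$. This splits $\mathrm{Irr}(P)/E$ into four families, with stabilizers $E$, $E_1$, $E_2$, and $1$. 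As the extension $N_\lambda=P\rtimes\tilde E_\lambda$ splits, each linear $\lambda$ extends to its inertia group, so the projective Clifford obstruction vanishes and the irreducible modules over the orbit of $\lambda$ correspond to the irreducible $\gamma$-projective representations $\psi$ of $E_\lambda$, of degree $[E:E_\lambda]\cdot\psi(1)$.

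Finally I would read off the counts. The family with stabilizer $E$ (trivial $\lambda$) contributes the unique degree-$l$ character; the two mixed families, with stabilizers $E_1$ and $E_2$, contribute $\tfrac{p^m-1}{l}\cdot l=p^m-1$ and $\tfrac{p^n-1}{l}\cdot l=p^n-1$ characters of degree $l\cdot 1=l$; and the family with trivial stabilizer contributes $\tfrac{p^n-1}{l}\cdot\tfrac{p^m-1}{l}$ characters of degree $l^2$. Summing yields $p^n+p^m-1$ characters of degree $l$ and $\tfrac{p^n-1}{l}\cdot\tfrac{p^m-1}{l}$ of degree $l^2$, and the identity $\sum\chi(1)^2=l^2(p^n+p^m-1)+l^4\cdot\tfrac{(p^n-1)(p^m-1)}{l^2}=p^{n+m}l^2$ confirms that no characters are missed. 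The main obstacle is the bookkeeping of the second step, namely checking that $l_{N_G(P)}(c)=1$ makes $\gamma$ nondegenerate so that $\mathcal{K}_\gamma E\cong\mathrm{Mat}_l(\mathcal{K})$ while its restrictions to the cyclic factors $E_1,E_2$ stay split, together with the vanishing of the Clifford obstruction; the orbit counting in the last two steps is then routine.
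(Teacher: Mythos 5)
Your proposal is correct and is essentially the argument the paper intends: the paper skips the proof, referring to \cite[Proposition 5.3]{KL}, and that argument is precisely your Clifford-theoretic count over $P\trianglelefteq N$ --- the cocycle $\gamma$ is nondegenerate on $E$ (so $\mathcal{K}_\gamma E\cong\mathrm{Mat}_l(\mathcal{K})$) but trivial on each cyclic factor $E_i$, and the $E$-orbits on $\mathrm{Irr}_\mathcal{K}(P)$ with stabilizers $E$, $E_1$, $E_2$, $1$ give exactly your four families. One small repair: since $\mathcal{K}$ is the quotient field of a complete discrete valuation ring rather than an algebraically closed field, $H^2(E_i,\mathcal{K}^\times)=\mathcal{K}^\times/(\mathcal{K}^\times)^l$ need not vanish; instead note that the preimage $\tilde{E}_i$ of $E_i$ in $\tilde{E}$ is abelian (a central extension of a cyclic group by a central subgroup), so the $l$ characters of $\tilde{E}_i$ lying over $\theta$ are linear because $\mathcal{K}$ is big enough, which yields your claim that $\gamma$ restricts to a coboundary on each $E_i$.
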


\section{The extension of local system}
Keep the notation as above.
In this section,
we will use the so-called $(G,b)$-local system
introduced by Puig and Usami in \cite{PU}
to prove the main theorem.

First,
let us recall some notation and
state the definition of $(G,b)$-local system
under our setting
(see \cite{PU}).

Let $\mathcal{CF}_\mathcal{K}(G)$ be the
vector space of $\mathcal{K}$-valued class functions of $G$
and
$\mathcal{BCF}_\mathcal{K}(G)$ be the
vector space of $\mathcal{K}$-valued class functions on the set
$G_{p^\prime}$ of $p^\prime$-elements of $G$.
It is clear that
the set of irreducible ordinary characters of $G$ is a $\mathcal{K}$-basis
of $\mathcal{CF}_\mathcal{K}(G)$
and
the set of irreducible Brauer characters of $G$ is a $\mathcal{K}$-basis
of $\mathcal{BCF}_\mathcal{K}(G)$.
For $\chi,\chi^\prime\in\mathcal{CF}_\mathcal{K}(G)$,
we denote by
$\langle\chi,\chi^\prime\rangle$ the inner product of $\chi$ and $\chi^\prime$.

Let $u$ be a $p$-element of $G$.
we have the well-known surjective $\mathcal{K}$-linear map
$d_G^u:\mathcal{CF}_\mathcal{K}(G)\longrightarrow
\mathcal{BCF}_\mathcal{K}(C_G(u))$
defined by
$d_G^u(\chi)(s)=\chi(us)$
for any
$\chi\in\mathcal{CF}_{\mathcal{K}}(G)$
and
$s\in C_G(u)_{p^\prime}$.
It has a section
$e_G^u:\mathcal{BCF}_\mathcal{K}(C_G(u))\longrightarrow
\mathcal{CF}_\mathcal{K}(G)$
such that
for $\varphi\in\mathcal{BCF}_\mathcal{K}(C_G(u))$,
$e_G^u(\varphi)(g)=0$ if
the $p$-part of $g$ is not conjugate to $u$ in $G$.

For the block $b$,
let $\mathcal{CF}_\mathcal{K}(G,b)$
be the subspace of $\mathcal{CF}_\mathcal{K}(G)$
generated by the elements in
$\mathrm{Irr}_\mathcal{K}(G,b)$
and
$\mathcal{L}_\mathcal{K}(G,b)$
the group of generalized characters in $b$.
Also,
let
$\mathcal{CF}_\mathcal{K}^\circ(G,b)=
\mathcal{CF}_\mathcal{K}(G,b)\cap\mathrm{Ker}(d_G^1)$
and
$\mathcal{L}_\mathcal{K}^\circ(G,b)=
\mathcal{L}_\mathcal{K}(G,b)\cap\mathrm{Ker}(d_G^1)$.

\begin{defn}(Puig-Usami \cite[3.2]{PU})
  With the above notation and assumption.
  Let $X$ be an $E$-stable non-empty set of subgroups of $P$
  and assume that $X$ contains any subgroup of $P$ containing an element of $X$.
  Let $\Gamma$ be a map over $X$ sending $Q\in X$ to a bijective isometry
  $$\Gamma_Q:\mathcal{BCF}_{\mathcal{K}}(C_N(Q),e_\theta)\longrightarrow
  \mathcal{BCF}_\mathcal{K}(C_G(Q),b_Q).$$
  If $\Gamma$ satisfies the following conditions,
  then $\Gamma$ is called a $(G,b)$-{\rm{local system}} over $X$.

  (i) For any $Q\in X$,
  any $\eta\in\mathcal{BCF}_\mathcal{K}(C_N(Q),e_\theta)$
  and
  any $s\in E$,
  we have
  $\Gamma_Q(\eta)^s=\Gamma_{Q^s}(\eta^s)$.

  (ii) For any $Q\in X$
  and any $\eta\in\mathcal{L}_\mathcal{K}(C_N(Q),e_\theta)$,
  the sum
  $$\sum\limits_{u}e_{C_G(Q)}^u(\Gamma_{Q\cdot\langle u\rangle}
  (d_{C_N(Q)}^u(\eta)))$$
  where $u$ runs over a set of representatives $U_Q$ for the orbits
  of $C_E(Q)$ in $P$,
  is a generalized character of $C_G(Q)$.
\end{defn}

Let $\Gamma$ be a $(G,b)$-local system over $X$.
Such $\Gamma$ always exists by \cite[3.4.2]{PU}.
For any $Q\in X$,
we have a map
$\Delta_Q:
\mathcal{CF}_\mathcal{K}(C_N(Q),e_\theta)\longrightarrow
\mathcal{CF}_\mathcal{K}(C_G(Q),b_Q)$ defined by
$$\Delta_Q(\eta)=
\sum\limits_{u\in U_Q}e_{C_G(Q)}^u(\Gamma_{Q\cdot\langle u\rangle}
  (d_{C_N(Q)}^u(\eta))).$$
Then by \cite[3.3 and 3.4]{PU}
$\Delta_Q$ gives a perfect isometry between
the block $e_\theta$ of $C_N(Q)$
and
the block $b_Q$ of $C_G(Q)$
and
$\Delta_Q(\lambda\ast\eta)=\lambda\ast\Delta_Q(\eta)$
for any
$\lambda\in\mathcal{CF}_\mathcal{K}(P)^{C_E(Q)}$
and
$\eta\in\mathcal{CF}_\mathcal{K}(C_N(Q))$.
Here,
$\mathcal{CF}_\mathcal{K}(P)^{C_E(Q)}$ denotes the set
of $C_E(Q)$-stable elements of $\mathcal{CF}_\mathcal{K}(P)$
and
$\ast$ denotes the $\ast$-construction of charaters
due to Brou$\acute{\mathrm{e}}$ and Puig (see \cite{BP}).
Hence,
if $X$ contains the trivial subgroup $1$ of $P$,
then $\Delta_1$ induces a perfect isometry
between the block $e_\theta$ of $N$
and
the block $b$ of $G$.
Moreover,
this is an isotypy in the sense of \cite{B}
by \cite[Proposition 2.7]{WZZ}.

In \cite{PU},
Puig and Usami developed a criterion for the
extendibility of the $(G,b)$-local system.
With the notation above.
Suppose that $1\not\in X$
and
let $Q$ be a maximal subgroup of $P$
such that $Q\not\in X$.
Denote by $X^\prime$ the union of
$X$ and the $E$-orbit of $Q$.
For any subset $Y$ of $\mathcal{O}C_N(Q)$,
denote by $\bar{Y}$ the image of $Y$
under the canonical map from
$\mathcal{O}C_N(Q)$ to
$\mathcal{O}C_N(Q)/Q$.
We have the similar notation for $\mathcal{O}C_G(Q)$.
So $\bar{e}_\theta$ and $\bar{b}_Q$ are the blocks of
$\bar{C}_N(Q)$ and $\bar{C}_G(Q)$ respectively.
Set
$\Delta_Q^\circ=
\sum\limits_{u\in U_Q-Q}e_{C_G(Q)}^u\circ\Gamma_{Q\cdot\langle u\rangle}
  \circ d_{C_N(Q)}^u$
(see \cite[3.6.2]{PU}).
By \cite[Proposition 3.7 and Remark 3.8]{PU},
$\Delta_Q^\circ$ induces a bijective isometry
$$\bar{\Delta}_Q^\circ:
\mathcal{CF}_\mathcal{K}^\circ(\bar{C}_N(Q),\bar{e}_\theta)\cong
\mathcal{CF}_\mathcal{K}^\circ(\bar{C}_G(Q),\bar{b}_Q)$$
such that
$\bar{\Delta}_Q^\circ(\mathcal{L}_\mathcal{K}^\circ(\bar{C}_N(Q),\bar{e}_\theta))=
\mathcal{L}_\mathcal{K}^\circ(\bar{C}_G(Q),\bar{b}_Q)$.
Clearly,
$\bar{\Delta}_Q^\circ(\lambda\ast\eta)=
\lambda\ast\bar{\Delta}_Q^\circ(\eta)$
for $\lambda\in\mathrm{Irr}_\mathcal{K}(\bar{P})^{C_E(Q)}$
and
$\eta\in\mathcal{L}_\mathcal{K}^\circ(\bar{C}_N(Q),\bar{e}_\theta)$
(see \cite[Case 2.2]{W05}) and
$\bar{\Delta}_Q^\circ$ is $N_E(Q)$-stable.
The following is the key criterion of extendibility.

\begin{prop}(\cite[Proposition 3.11]{PU})\label{extendibility}
 With the notation above,
 the $(G,b)$-local system $\Gamma$ over $X$ can be extended to a
 $(G,b)$-local system $\Gamma^\prime$ over $X^\prime$
 if and only if
 $\bar{\Delta}_Q^\circ$ can be extended to an $N_E(Q)$-stable bijective isometry
 $$\bar{\Delta}_Q:\mathcal{CF}_\mathcal{K}(\bar{C}_N(Q),\bar{e}_{\theta})\cong
 \mathcal{CF}_\mathcal{K}(\bar{C}_G(Q),\bar{b}_Q)$$
 such that
 $\bar{\Delta}_Q(\mathcal{L}_\mathcal{K}(\bar{C}_N(Q),\bar{e}_\theta))=
\mathcal{L}_\mathcal{K}(\bar{C}_G(Q),\bar{b}_Q)$.
\end{prop}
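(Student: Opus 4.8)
The statement is an equivalence, so the plan is to reduce the extension problem to a single local datum and then prove the two implications in parallel. First I would record that, by condition (i) of a local system, specifying $\Gamma^\prime$ on the whole $E$-orbit of $Q$ is equivalent to specifying the single isometry $\Gamma^\prime_Q$ on $\mathcal{BCF}_\mathcal{K}(C_N(Q),e_\theta)$ subject to compatibility with the stabiliser $N_E(Q)$ of $Q$ in $E$; the conjugate maps $\Gamma^\prime_{Q^s}$ are then forced by $\Gamma^\prime_{Q^s}(\eta^s)=\Gamma^\prime_Q(\eta)^s$. Thus the only genuine freedom in extending $\Gamma$ is the choice of one $N_E(Q)$-stable isometry at $Q$.

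Next I would isolate the term that this new datum governs. In the defining sum $\sum_{u\in U_Q}e_{C_G(Q)}^u(\Gamma_{Q\cdot\langle u\rangle}(d_{C_N(Q)}^u(\eta)))$ every summand with $u\neq 1$ involves the strictly larger subgroup $Q\cdot\langle u\rangle$, which already lies in $X$; hence all of these terms are determined by the given system and assemble exactly into $\Delta_Q^\circ$, which is already known to carry $\mathcal{L}_\mathcal{K}(C_N(Q),e_\theta)$ into generalized characters. The only new contribution is the $u=1$ term, and since $e_{C_G(Q)}^1$ and $d_{C_N(Q)}^1$ are the canonical extension/restriction maps relative to $p^\prime$-elements, this term is precisely $\Gamma_Q$ itself. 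Consequently condition (ii) for the new subgroup collapses to the single requirement that $\Delta_Q^\circ$ together with the chosen $\Gamma_Q$ send $\mathcal{L}_\mathcal{K}(C_N(Q),e_\theta)$ into the generalized characters of $C_G(Q)$.

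I would then invoke the reduction quoted just before the proposition, namely that $\Delta_Q^\circ$ already induces $\bar{\Delta}_Q^\circ$ on $\mathcal{CF}_\mathcal{K}^\circ$. Since $Q$ is a central $p$-subgroup of each of $C_N(Q)$ and $C_G(Q)$, the $p^\prime$-class functions of $C_?(Q)$ are canonically identified with those of $\bar{C}_?(Q)$; under this identification $\Delta_Q^\circ$ accounts precisely for the subspace $\mathcal{CF}_\mathcal{K}^\circ$ of class functions vanishing on $p^\prime$-elements, on which it restricts to the given $\bar{\Delta}_Q^\circ$. The new datum $\Gamma_Q$ therefore supplies exactly the behaviour on a complement, namely on the $p^\prime$-part, and choosing it amounts to extending $\bar{\Delta}_Q^\circ$ from $\mathcal{CF}_\mathcal{K}^\circ(\bar{C}_N(Q),\bar{e}_\theta)$ to a full isometry $\bar{\Delta}_Q$ on $\mathcal{CF}_\mathcal{K}(\bar{C}_N(Q),\bar{e}_\theta)$. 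Under this dictionary the integrality requirement of the previous paragraph becomes $\bar{\Delta}_Q(\mathcal{L}_\mathcal{K}(\bar{C}_N(Q),\bar{e}_\theta))=\mathcal{L}_\mathcal{K}(\bar{C}_G(Q),\bar{b}_Q)$, the $\ast$-equivariance of $\Delta$ descends to $\bar{\Delta}_Q$, and $N_E(Q)$-compatibility of $\Gamma_Q$ becomes $N_E(Q)$-stability of $\bar{\Delta}_Q$. With this established both implications are then formal: an extension $\Gamma^\prime$ produces such a $\bar{\Delta}_Q$, and conversely any such $\bar{\Delta}_Q$ lifts back to an $N_E(Q)$-stable $\Gamma_Q$ for which (i) and (ii) hold.

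The main obstacle I anticipate is the faithfulness of this passage to the quotient by $Q$: one must verify that reduction modulo $Q$ neither destroys nor manufactures solutions, i.e. that the generalized-character condition (ii), which a priori is a statement over $C_G(Q)$, descends to an \emph{equivalent} statement over $\bar{C}_G(Q)$, with the kernel and cokernel of $\mathcal{O}C_?(Q)\to\mathcal{O}C_?(Q)/Q$ on the relevant spaces of class functions accounted for exactly by the singular part $\Delta_Q^\circ$ that is already integral. Checking that the global integrality splits cleanly as ``singular part already a generalized character'' plus ``regular part $=$ the lattice condition on $\bar{\Delta}_Q$'' is the technical heart of the criterion, and it is here that the $p^\prime$-section structure of the centralisers and the behaviour of the $\ast$-construction under reduction must be used with care.
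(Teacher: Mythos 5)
First, a remark on the comparison itself: the paper contains no proof of this proposition --- it is imported verbatim from Puig--Usami \cite[Proposition 3.11]{PU} --- so your attempt can only be judged against the actual content of that criterion, and there it has a genuine flaw. The flaw is in your second paragraph. You claim that in the sum $\sum_{u\in U_Q}e_{C_G(Q)}^u(\Gamma_{Q\cdot\langle u\rangle}(d_{C_N(Q)}^u(\eta)))$ every summand with $u\neq 1$ involves a subgroup $Q\cdot\langle u\rangle$ strictly larger than $Q$, hence already in $X$, so that these terms assemble into $\Delta_Q^\circ$ and the only new contribution is the $u=1$ term. That is false whenever $Q\neq 1$ (which is the main case here; only Case 2 of the paper's application has $Q=1$): for \emph{every} $u\in Q$ one has $Q\cdot\langle u\rangle=Q\notin X$, so all $|Q|$ summands with $u\in Q$ involve the new, yet-to-be-constructed isometry $\Gamma^\prime_Q$; moreover $C_E(Q)$ fixes $Q$ pointwise, so these really are $|Q|$ distinct orbit representatives in $U_Q$. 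This is exactly why the paper defines $\Delta_Q^\circ$ as the sum over $u\in U_Q-Q$, not over $u\neq 1$. The miscount is not cosmetic: it is precisely because the unknown $\Gamma^\prime_Q$ gets spread over all the $p$-sections $e_{C_G(Q)}^u$ with $u\in Q$ --- the sections that constitute the support of class functions pulled back from $\bar{C}_G(Q)=C_G(Q)/Q$ --- that the criterion is stated in terms of \emph{ordinary} class functions of the quotient groups $\bar{C}_N(Q)$, $\bar{C}_G(Q)$ at all. Your dictionary ``new datum $=$ behaviour on the $p^\prime$-part'' is built on the wrong decomposition, and with it the passage from condition (ii) to the lattice condition on $\bar{\Delta}_Q$ cannot be carried out as described.

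Second, even after correcting the decomposition, what you call the ``technical heart'' --- that condition (ii) for the extended system is \emph{equivalent} to $\bar{\Delta}_Q$ being a bijective isometry with $\bar{\Delta}_Q(\mathcal{L}_\mathcal{K}(\bar{C}_N(Q),\bar{e}_\theta))=\mathcal{L}_\mathcal{K}(\bar{C}_G(Q),\bar{b}_Q)$ --- is the whole content of the proposition, and your proposal defers it rather than proves it. In the forward direction one must show that the perfect isometry $\Delta^\prime_Q$ attached to an extension respects the partition of $\mathrm{Irr}_\mathcal{K}(C_N(Q),e_\theta)$ and $\mathrm{Irr}_\mathcal{K}(C_G(Q),b_Q)$ by central characters of the central $p$-subgroup $Q$ (this uses the compatibility with the $\ast$-construction), so that it genuinely induces an isometry of the quotient blocks extending $\bar{\Delta}_Q^\circ$. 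In the backward direction one must verify integrality of $\sum_{u\in U_Q}e_{C_G(Q)}^u\circ\Gamma^\prime_{Q\cdot\langle u\rangle}\circ d_{C_N(Q)}^u$ on the whole lattice $\mathcal{L}_\mathcal{K}(C_N(Q),e_\theta)$, including generalized characters whose constituents have non-trivial central character on $Q$, not only those inflated from $\bar{C}_N(Q)$; this is where the matching of the ``singular'' part governed by $\Delta_Q^\circ$ with the $Q$-section part governed by $\Gamma^\prime_Q$ has to be checked, and it is not formal. As it stands, your text is a plausible plan whose key combinatorial identification is wrong and whose essential equivalence is assumed rather than established.
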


In order to prove Theorem \ref{MT},
it suffices to show that
there is a $(G,b)$-local system over the set of all the subgroups $P$.
Hence,
by Proposition \ref{extendibility},
we can assume that
there is a $(G,b)$-local system $\Gamma$ over $X$
such that
$1\not\in X$
and
$Q$ is a maximal subgroup of $P$
such that $Q\not\in X$.

\begin{thm}\label{MT'}
With the notation above and
assumptions of Section $2$.
Then $\bar{\Delta}_Q^\circ$ can be extended to an $N_E(Q)$-stable bijective isometry
 $$\bar{\Delta}_Q:\mathcal{CF}_\mathcal{K}(\bar{C}_N(Q),\bar{e}_{\theta})\cong
 \mathcal{CF}_\mathcal{K}(\bar{C}_G(Q),\bar{b}_Q)$$
 such that
 $\bar{\Delta}_Q(\mathcal{L}_\mathcal{K}(\bar{C}_N(Q),\bar{e}_\theta))=
\mathcal{L}_\mathcal{K}(\bar{C}_G(Q),\bar{b}_Q)$.
\end{thm}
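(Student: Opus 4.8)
The plan is to verify the extendibility criterion of Proposition~\ref{extendibility} by analysing the two local blocks $\bar e_\theta$ of $\bar C_N(Q)$ and $\bar b_Q$ of $\bar C_G(Q)$, separating cases according to the isomorphism type of $\bar P=P/Q$ and of the inertial quotient $C_E(Q)$ acting on it. The first step is to read off this local data from the diagonal action of Lemma~\ref{diagonal action}. Writing $\pi_i$ for the projection $P=P_1\times P_2\to P_i$, an element $(e_1,e_2)\in E_1\times E_2$ centralises $Q$ if and only if $e_i$ fixes $\pi_i(Q)$ pointwise for $i=1,2$; since each $E_i$ is a nontrivial $p'$-subgroup of $\mathrm{Aut}(P_i)$ with $P_i$ cyclic, it acts as a scalar $x\mapsto x^{r}$ with $r\not\equiv1\pmod p$, so $C_{E_i}(\pi_i(Q))$ is trivial when $\pi_i(Q)\neq1$ and is all of $E_i$ when $\pi_i(Q)=1$. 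Hence $C_E(Q)=C_{E_1}(\pi_1(Q))\times C_{E_2}(\pi_2(Q))$ is governed by which projections of $Q$ vanish, and together with $\bar P=P/Q$ this pins down the defect groups and inertial quotients of $\bar e_\theta$ and $\bar b_Q$; the relevant equivariance group throughout is $N_E(Q)\geq C_E(Q)$, acting on $Q$ through $N_E(Q)/C_E(Q)$.

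This produces three essentially different configurations: (1) both $\pi_1(Q),\pi_2(Q)$ nontrivial, so $C_E(Q)=1$; (2) exactly one projection trivial, so $C_E(Q)$ is a single cyclic factor $\cong C_l$; and (3) $Q=1$, so $C_E(Q)=E=E_1\times E_2$. In configuration (1) the trivial inertial quotient makes both local blocks nilpotent, whence each has a unique simple module and $\mathcal{CF}^\circ_\mathcal{K}$ has codimension $1$ in $\mathcal{CF}_\mathcal{K}$; the extension of $\bar\Delta_Q^\circ$ is then forced up to sign, and I would check that matching the canonical characters yields an $N_E(Q)$-stable isometry carrying $\mathcal{L}_\mathcal{K}(\bar C_N(Q),\bar e_\theta)$ onto $\mathcal{L}_\mathcal{K}(\bar C_G(Q),\bar b_Q)$. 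In the subcase of (2) with $Q=P_i$, the defect group $\bar P\cong P_{3-i}$ is cyclic and both local blocks have inertial index $l$, so their character theory is that of a block with cyclic defect group; here the extension is supplied by the Brauer-tree isometry theory, matching the $l$ nonexceptional characters on each side while the exceptional family is already transported by $\bar\Delta_Q^\circ$.

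The cases in which $\bar P$ has rank $2$ carry the real content: configuration (3), and the subcase of (2) with $Q\subsetneq P_i$, where $\bar P=(P_i/Q)\times P_{3-i}$ with $C_E(Q)$ acting faithfully on $P_{3-i}$ and trivially on $P_i/Q$. In both, the $N$-side block $\bar e_\theta$ is completely explicit: its irreducible characters are described as in Lemma~\ref{irreducible character of c}, relative to $\bar P$ and $C_E(Q)$, so the two degrees, their multiplicities, and $l_{\bar C_N(Q)}(\bar e_\theta)$ are known. When $Q\subsetneq P_i$, the fact that $C_E(Q)$ centralises $P_i/Q$ makes $\bar e_\theta$ a tensor product of the group algebra of $P_i/Q$ with a block of cyclic defect group $P_{3-i}$; I would establish the same factorisation on the $G$-side $\bar b_Q$ and thereby reduce the extension of $\bar\Delta_Q^\circ$ to the already-treated cyclic case in the $P_{3-i}$ coordinate together with the identity in the $P_i/Q$ coordinate. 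The configuration $Q=1$, where $\bar b_Q=b$ and the inertial quotient is the full rank-$2$ group $E$ carrying a nondegenerate cocycle --- this is exactly where the hypothesis $l_{N_G(P)}(c)=1$ enters --- is the genuine base case and must be treated directly, generalising the elementary abelian analysis of \cite[Section~5]{KL} from $C_p\times C_p$ to $C_{p^n}\times C_{p^m}$.

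I expect the main obstacle to be the construction of $\bar\Delta_Q$ in the $Q=1$ configuration, and more generally the demand that in every rank-$2$ case the extension be at once a $\mathcal K$-isometry, carry $\mathcal{L}_\mathcal{K}(\bar C_N(Q),\bar e_\theta)$ onto $\mathcal{L}_\mathcal{K}(\bar C_G(Q),\bar b_Q)$, and be stable under $N_E(Q)$ (all of $E$ when $Q\leq P_i$, and $E$ acting on $P$ when $Q=1$). It is the combination of the lattice condition with $N_E(Q)$-equivariance that defeats a naive character-by-character matching; I would meet it by prescribing $\bar\Delta_Q$ on a complement to $\mathcal{CF}^\circ_\mathcal{K}$, namely on the characters not annihilated by the decomposition map, compatibly with the $\ast$-construction, so that the relation $\bar\Delta_Q^\circ(\lambda\ast\eta)=\lambda\ast\bar\Delta_Q^\circ(\eta)$ for $\lambda\in\mathrm{Irr}_\mathcal{K}(\bar P)^{C_E(Q)}$ propagates to the full isometry, and by verifying $N_E(Q)$-stability coordinate-by-coordinate using the product structures of $E$ and $\bar P$.
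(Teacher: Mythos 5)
Your reduction to cases by which projections $\pi_1(Q),\pi_2(Q)$ vanish reproduces exactly the paper's trichotomy $C_E(Q)\in\{1,\,E_1\text{ or }E_2,\,E\}$, and your treatment of the nilpotent case $C_E(Q)=1$ and your plan for $Q=1$ (generalising the Kessar--Linckelmann norm analysis) match the paper's Cases 1 and 2. The gaps are in the case $1\neq Q\leq P_2$, which is where the paper's real work lies. When $Q=P_2$, invoking ``Brauer-tree isometry theory'' does not produce the required map: the extension must simultaneously restrict to the given $\bar{\Delta}_Q^\circ$ \emph{and} be $E_2$-stable, and for the latter you must know how $E_2=N_E(Q)/C_E(Q)$ permutes $\mathrm{Irr}_\mathcal{K}(\bar{C}_G(Q),\bar{b}_Q)$. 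On the $N$-side $E_2$ permutes $\tau_1,\dots,\tau_l$ regularly, so an equivariant extension can only exist if there is a regular $E_2$-orbit of the same length on the $G$-side; this is not automatic, and it is precisely what the paper proves. It does so by re-using the hypothesis $l_{N_G(P)}(c)=1$ at this local level: setting $H=N_G(Q,b_Q)$, it shows $l_H(b_Q)=1$ (via an auxiliary lemma on defect groups of covering blocks, using \cite{K90} and \cite{D}), deduces from Watanabe's count $l_{C_G(Q)}(b_Q)=l$ and Clifford theory that $H$ acts transitively on $\mathrm{IBr}(C_G(Q),b_Q)$, and transfers this to ordinary characters by Puig's basic Morita equivalence for nilpotent extensions \cite{P11} together with unique lifting in cyclic blocks. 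So your assertion that $l_{N_G(P)}(c)=1$ enters ``exactly'' at $Q=1$ is wrong; without re-using it here the $E_2$-equivariance cannot be established.

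The second gap is the subcase $1\neq Q<P_2$: your plan to ``establish the same factorisation on the $G$-side'' is not available. While $\bar{C}_N(Q)=(P_1\rtimes\tilde{E}_1)\times\bar{P}_2$ really is a direct product, $\bar{P}_2=P_2/Q$ is neither central nor normal in $\bar{C}_G(Q)$, so there is no reason for $\mathcal{O}\bar{C}_G(Q)\bar{b}_Q$ to decompose as a tensor product of a cyclic-defect block with $\mathcal{O}\bar{P}_2$; proving such a structure theorem for $\bar{b}_Q$ would be far harder than Theorem \ref{MT'} itself. The paper's substitute is purely character-theoretic: characters of $\bar{P}_2$, inflated to $\bar{P}$, are $C_E(Q)$-stable, so the compatibility of $\bar{\Delta}_Q^\circ$ with the $\ast$-construction applies to them, and Robinson's theorem \cite{R} that $\mathrm{Irr}_\mathcal{K}(\bar{P}_2)\setminus\{1_{\bar{P}_2}\}$ acts freely on the height-zero characters of $\bar{b}_Q$ organises $\mathrm{Irr}_\mathcal{K}(\bar{C}_G(Q),\bar{b}_Q)$ into full-length $\ast$-orbits $\chi_\xi\ast\bar{\zeta}$, $\chi_\tau\ast\bar{\zeta}$ --- the character-level shadow of the factorisation you wanted. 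Your closing paragraph correctly identifies the tension between the lattice condition and $N_E(Q)$-equivariance, but the remedy you propose (prescribing $\bar{\Delta}_Q$ on a complement of $\mathcal{CF}^\circ_\mathcal{K}$ ``compatibly with the $\ast$-construction'') is exactly the step that requires these two missing ingredients --- the regular-orbit argument and the free-action/$\ast$-orbit argument --- so as it stands the proposal does not close.
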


\begin{proof}
  By the structure of $E$ and $P$,
  $C_E(Q)$ has only three possibilities: $1$, $E$ and
  $E_1$ or $E_2$.
  So we will divided the proof into $3$ cases.

  \vspace{5mm}
  {\bf Case 1}~~~Assume that $C_E(Q)=1$.

  Then the blocks $e_\theta$ of $C_N(Q)$ and
  $b_Q$ of $C_G(Q)$ are nilpotent.
  By the same argument as in \cite[4.4]{PU},
  $\bar{\Delta}_Q^\circ$ can be extended to an $N_E(Q)$-stable bijective isometry
  $\bar{\Delta}_Q$.

    \vspace{5mm}
  {\bf Case 2}~~~Assume that $C_E(Q)=E$.

  Then $Q$ has to be trivial subgroup of $P$ and
  $N_E(Q)=E$.
  So $\bar{C}_N(Q)=N$ and
  $\bar{C}_G(Q)=G$ and
  we have a bijective isometry
  $$\bar{\Delta}^\circ:\mathcal{CF}_\mathcal{K}^\circ(N,e_\theta)\longrightarrow
  \mathcal{CF}_\mathcal{K}^\circ(G,b)$$
  such that
  $\bar{\Delta}^\circ(\mathcal{L}_\mathcal{K}^\circ(N,e_\theta))=
  \mathcal{L}_\mathcal{K}^\circ(G,b)$.

  The following technique we adopt to extend $\bar{\Delta}^\circ$ is essentially due to
  Kessar and Linckelmann (see \cite[Theorem 4.1]{KL}).

  By Lemma \ref{irreducible character of c},
  we have the following disjoint union
  $$\mathrm{Irr}_\mathcal{K}(N,e_\theta)=\Lambda_1\cup\Lambda_2,$$
  where
  $\Lambda_1$ consists of irreducible ordinary characters of dimension $l$
  and
  $\Lambda_2$ consists of irreducible ordinary characters of dimension $l^2$.
  Hence,
  $|\Lambda_1|=p^n+p^m-1$
  and
  $|\Lambda_2|=\frac{p^n-1}{l}\cdot\frac{p^m-1}{l}$.
  We can assume that $n\geq 2$.
  Then $|\Lambda_1|>2$ and $|\Lambda_2|>2$.
  Choose an element $\psi_i\in\Lambda_i$ and
  set
  $\Lambda_i^\prime=\Lambda_i-\{\psi_i\}$
  for $i=1,2$.
  Since $l_{N}(e_\theta)=1$,
  it is easy to see
  $$\mathcal{B}=\{\psi_1-\psi_1^\prime\,|\,\psi_1^\prime\in\Lambda_1^\prime\}
  \cup\{\psi_2-\psi_2^\prime\,|\,\psi_2^\prime\in\Lambda_2^\prime\}
  \cup\{\psi_2-l\psi_1\}$$
  is a $\mathbb{Z}$-basis of $\mathcal{L}_\mathcal{K}^\circ(N,e_\theta)$.
  Since $p$ is odd, $|\Lambda_i^\prime|\geq 3$ for $i=1,2$.
  So by the same argument in \cite[4.4]{PU},
  for any $i=1,2$,
  there exists a subset $\Omega_i=\{\chi_{\psi_i},\chi_{\psi_i^\prime}\,|\,
  \psi_i^\prime\in\Lambda_i^\prime\}$ of
  $\mathrm{Irr}_\mathcal{K}(G,b)$
  and
  $\delta_i\in\{\pm 1\}$
  such that
  $\bar{\Delta}^\circ(\psi_i-\psi_i^\prime)=
  \delta_i(\chi_{\psi_i}-\chi_{\psi_i^\prime})$.
  Since
  $\langle\psi_1-\psi_1^\prime,
  \psi_2-\psi_2^\prime\rangle=0$
  for any
  $\psi_1^\prime\in\Lambda_1^\prime$
  and
  $\psi_2^\prime\in\Lambda_2^\prime$,
  $\{\chi_{\psi_1},\chi_{\psi_1^\prime}\,|\,
  \psi_1^\prime\in\Lambda_1^\prime\}$
  and
  $\{\chi_{\psi_2},\chi_{\psi_2^\prime}\,|\,
  \psi_2^\prime\in\Lambda_2^\prime\}$
  have trivial intersection.
  Denote $\psi_2-l\psi_1$ by $\mu$.
  Then
  $\langle\mu,
  \psi_1-\psi_1^\prime\rangle=-l$
  for all $\psi_1^\prime\in\Lambda_1^\prime$.
  Thus
  \begin{equation}\label{equation E}
    \begin{array}{ll}
    \bar{\Delta}^\circ(\mu)=
    \delta_1(a-l)\chi_{\psi_1}+
    \delta_1a\sum\limits_{\psi_1^\prime\in\Lambda_1^\prime}\chi_{\psi_1^\prime}+
    \Xi
    \end{array}
  \end{equation}
  for some integer $a$ and
  some element $\Xi\in\mathcal{L}_\mathcal{K}(N,e_\theta)$
  not involving any of elements in $\Omega_1$.
  Since
  $\langle\mu,
  \psi_2-\psi_2^\prime\rangle=1$
  and
  $\bar{\Delta}^\circ(\psi_2-\psi_2^\prime)=
  \delta_2(\chi_{\psi_2}-\chi_{\psi_2^\prime})$,
  $\Xi$ must involve one of the two characters occuring in
  $\bar{\Delta}^\circ(\psi_2-\psi_2^\prime)$
  for any $\psi_2^\prime\in\Lambda_2^\prime$.
  Taking norms on both sides in equation (\ref{equation E}),
  we have
  \begin{equation}\label{inequation norm}
    \begin{array}{ll}
      &1+l^2\geq(a-l)^2+(p^n+p^m-2)a^2=
      (p^n+p^m-1)a^2-2la+l^2\\
      \Longleftrightarrow
      &1\geq(p^n+p^m-1)a^2-2la
    \end{array}
  \end{equation}

  Suppose that $a\leq0$.
  Since $a$ is integer and
  $p^n+p^m-1,l$ are positive integers,
  $a$ has to be $0$.

  Suppose that $a>0$.
  Since $p^n+p^m-1>2l$,
  $(p^n+p^m-1)a^2-2la>(p^n+p^m-1)(a^2-a)$.
  This forces $a=1$.
  Hence,
  $a=0$ or $1$.
  Notice that $\Xi\neq 0$.
  This implies (\ref{inequation norm}) is a proper inequality.
  So $a$ must be $0$.
  Then equation (\ref{equation E}) becomes
  $$\bar{\Delta}^\circ(\mu)=-\delta_1l\chi_{\psi_1}+\Xi.$$
  Comparing norms,
  we have
  $\langle\Xi,\Xi\rangle=1$.

  For any $\psi_2^\prime,\psi_2^{\prime\prime}\in\Lambda_2^\prime$,
  $$\langle\bar{\Delta}^\circ(\mu),
  \delta_2(\chi_{\psi_2}-\chi_{\psi_2^\prime})\rangle=
  \langle\mu,
  \psi_2-\psi_2^\prime\rangle=1$$
  and
  $$\langle\bar{\Delta}^\circ(\mu),
  \delta_2(\chi_{\psi_2^{\prime}}-\chi_{\psi_2^{\prime\prime}})\rangle=
  \langle\mu,
  \psi_2^\prime-\psi_2^{\prime\prime}\rangle=0.$$
  Then $\Xi=\delta_2\chi_{\psi_2}$.
  But $\bar{\Delta}^\circ(\mu)(1)=0$.
  This forces $\delta_1=\delta_2$.
  Since $\mathcal{B}$ is a $\mathbb{Z}$-basis of
  $\mathcal{L}_\mathcal{K}^\circ(N,e_\theta)$,
  $\mathrm{Irr}_\mathcal{K}(G,b)=\Omega_1\cup\Omega_2$.
  Hence,
  we get a bijective isometry $\bar{\Delta}$ from
  $\mathcal{L}_\mathcal{K}(N,e_\theta)$
  to
  $\mathcal{L}_\mathcal{K}(G,b)$
  mapping $\psi_i$ and $\psi_i^\prime$
  to $\chi_{\psi_i}$ and $\chi_{\psi_i^\prime}$
  respectively,
  where $i=1,2$.
  In particular,
  $l_G(b)=l_N(e_\theta)=1$.
  Clearly, it is an extension of $\bar{\Delta}^\circ$.
  Since $\bar{\Delta}^\circ$ is $E$-stable
  and $l_G(b)=l_N(e_\theta)=1$,
  $\bar{\Delta}$ is also $E$-stable.


      \vspace{5mm}
  {\bf Case 3}~~~Assume that $C_E(Q)=E_i$ for some $i=1,2$.

  We can assume that $C_E(Q)=E_1$ and then $1\neq Q\leq P_2$
  and $N_E(Q)=E$.
  It suffices to prove that
  $\bar{\Delta}_Q^\circ$ can extend to an $E_2$-stable bijective isometry
  $\bar{\Delta}_Q:\mathcal{L}_\mathcal{K}(\bar{C}_N(Q),\bar{e}_\theta)\longrightarrow
  \mathcal{L}_\mathcal{K}(\bar{C}_G(Q),\bar{b}_Q)$.

    By \cite[Theorem 1]{W14},
  $|\mathrm{Irr}_\mathcal{K}(\bar{C}_N(Q),\bar{e}_\theta)|=
  |\mathrm{Irr}_\mathcal{K}(\bar{C}_G(Q),\bar{b}_Q)|$
  and
  $l_{\bar{C}_N(Q)}(\bar{e}_\theta)=l_{\bar{C}_G(Q)}(\bar{b}_Q)$
  since the block $\bar{b}_Q$ of $\bar{C}_G(Q)$ has a cyclic hyperfocal subgroup.
  It is clear $C_N(Q)=(P_1\rtimes\tilde{E}_1)\times P_2$
  and $C_N(Q)\unlhd N$,
  where $\tilde{E}_1$ is the preimage of $E_1$ in $\tilde{E}$.
  Hence,
  $E_1$ is the inertial quotient of the block $e_\theta$ of $C_N(Q)$
  and
  $P_1$ is a hyperfocal subgroup with respect to $E_1$.
  By \cite[Theorem 1]{W14},
  $l_{C_N(Q)}(e_\theta)=l$.
  We will claim that $N$ acts transitively on $\mathrm{IBr}(C_N(Q),e_\theta)$.
  Indeed, this holds because
  $l_N(e_\theta)=1$ by the assumption and
  $N/C_N(Q)\cong E_2$ is a cyclic group of order $l$.

  Denote by $\mathrm{Irr}_\mathcal{K}(\tilde{E}_1)_\theta$ the subset
  of $\mathrm{Irr}_\mathcal{K}(\tilde{E}_1)$ consisting of characters covering $\theta$.
  Then $|\mathrm{Irr}_\mathcal{K}(\tilde{E}_1)_\theta|=l$
  and
  we set
  $\mathrm{Irr}_\mathcal{K}(\tilde{E}_1)_\theta=\{\tau_i\,|\, i=1,2,\cdots,l\}$,
  which is transitively acted by $N$.
  Hence,
  we can write
  $\mathrm{Irr}_\mathcal{K}(\tilde{E}_1)_\theta$ as
  $\{\tau^a\,|\,a\in E_2\}$
  for any $\tau\in\mathrm{Irr}_\mathcal{K}(\tilde{E}_1)_\theta$.
  By Clifford theorem,
  we have $\mathrm{Res}_Z^{\tilde{E}_1}(\tau_i)=\theta$ for any $i$
  and
  $\mathrm{Ind}_Z^{\tilde{E}_1}(\theta)
  =\sum\limits_{i=1}^l\tau_i$.
  Let $M$ be a representative of $\tilde{E}_1$-orbit of
  $\mathrm{Irr}_\mathcal{K}(P_1)-\{1_{P_1}\}$,
  where $1_{P_1}$ is the trivial character of $P_1$.
  Then
  $$\mathrm{Irr}_\mathcal{K}(\bar{C}_N(Q),\bar{e}_\theta)=
  \{\tau_i\bar{\zeta}_j\,|\, \bar{\zeta}_j\in\mathrm{Irr}_\mathcal{K}(\bar{P}_2),
     i=1,2,\cdots,l\}\cup
  \{\mathrm{Ind}_{P_1\times Z}^{P_1\rtimes\tilde{E}_1}(\xi\theta)\bar{\zeta}_j\,|\,
     \xi\in M,\bar{\zeta}_j\in\mathrm{Irr}_\mathcal{K}(\bar{P}_2)\}.$$
  We will write $\mathrm{Ind}_{P_1\times Z}^{P_1\rtimes\tilde{E}_1}(\xi\theta)$
  and
  $\bar{\chi}\cdot 1_{\bar{P}_2}$
  as $\mathrm{Ind}(\xi)$
  and $\bar{\chi}$ respectively for simplicity.
  Here,
  $\bar{\chi}$ is an element of
  $\mathcal{CF}_\mathcal{K}(\overline{P_1\rtimes\tilde{E}_1})$.
  Clearly,
  $\mathrm{Ind}(\xi)$ is $N$ and $E_2$-stable
  for any $\xi\in M$.
  Similar to the argument of \cite[Case 2]{W05},
  $$\{(\sum\limits_{i=1}^l\tau_i-\mathrm{Ind}(\xi))\bar{\zeta}\,|\,\xi\in M,
    \bar{\zeta}\in\mathrm{Irr}_\mathcal{K}(\bar{P}_2)\}\cup
    \{\tau_i-\tau_i\bar{\zeta}\,|\,i=1,2,\cdots,l,
         1_{\bar{P}_2}\neq\bar{\zeta}\in\mathrm{Irr}_\mathcal{K}(\bar{P}_2)\}$$
  is a $\mathbb{Z}$-basis of
  $\mathcal{L}_\mathcal{K}^\circ(\bar{C}_N(Q),\bar{e}_\theta)$.

      \vspace{2.5mm}
  {\bf Case 3.1}~~~Assume that $\bar{P}_2=1$, i.e.,
                    $Q=P_2$.

  Set $H=N_G(Q,b_Q)$.
  Then $H=C_G(Q)N_G(P,b_P)$ and
  $b_Q$ is still a block of $H$.
  Let $d$ be the Brauer correspondent of the block $b_Q$ of $H$
  in $N_H(P)$.
  Then $l_{N_H(P)}(d)=1$ by the assumption.
  We claim that $l_H(b_Q)=1$.

  Indeed, considering the canonical map from $\mathcal{O}H$ to $\mathcal{O}(H/Q)$,
  denote by $\bar{X}$ the image of $X$ under this canonical map
  for any subset $X$ of $\mathcal{O}H$.
  Then $\bar{b}_Q$ is still a block of $\bar{C}_G(Q)$
  and $\bar{H}/\bar{C}_G(Q)$ is a cyclic group of order $l$.
  By \cite[Lemma 3.5]{KR},
  $\mathrm{Br}_{\bar{P}}(\bar{b}_Q)=\overline{\mathrm{Br}_P(b_Q)}$.
  Since $l_{N_H(P)}(d)=1$,
  $\bar{d}$ is still a block of $\bar{N}_H(P)$.
  Therefore,
  $\mathrm{Br}_{\bar{P}}(\bar{b}_Q)=\bar{d}$ is a block of $\bar{N}_H(P)$.
 Suppose that the blocks of $\bar{H}$
 covering the block $\bar{b}_Q$ of $\bar{C}_G(Q)$
 have the same defect group $\bar{P}$.
  Then $\bar{b}_Q$ is a block of $\bar{H}$
  since $\mathrm{Br}_{\bar{P}}(\bar{b}_Q)$ is a block of $\bar{N}_H(P)$
  and $N_{\bar{H}}(\bar{P})=\bar{N}_H(P)$.
  Hence,
  it has a defect group $\bar{P}$ which is cyclic by our assumption.
  In particular,
  we have $l_H(b_Q)=l_{\bar{H}}(\bar{b}_Q)=l_{N_H(P)}(d)=1$
  since $N_{\bar{H}}(\bar{P})=\bar{N}_H(P)$.
  Consequently, the argument follows from the lemma below.

\vspace{0.35cm}
\noindent{\it {\bf Lemma 3.4}
  Let $L$ be a normal subgroup of $K$ such that
  $K/L$ is a cyclic $p^\prime$-group.
  Let $i$ be a $K$-stable block of $L$ with defect group $D$.
  For any block $e$ of $K$ covering $i$,
  $e$ has defect group $D$.}

\vspace{0.35cm}
\noindent{\it Proof.}
We will prove it by induction on $K/L$.
Let $M\leq K$ such that $M$ contains $L$
and $|M/L|$ is a prime.
Then $M\trianglelefteq K$ and
$K/M$ is still a cyclic $p^\prime$-group.
Denote by $M[i]$ the subgroup of $M$ consisting of
elements acting on $\mathcal{O}Li$ as inner automorphisms.
Therefore,
$M[i]=M$ or $L$.
Let $f$ be a block of $M$ covered by $e$.
So $f$ covers the block $i$ of $L$.
If $M[i]=M$,
then $\mathcal{O}Mf$ and $\mathcal{O}Li$ are source algebra equivalent
by \cite[Theorem 7]{K90}.
In particular,
the block $f$ has defect group $D$.
If $M[b]=L$,
then $f=i$ by \cite[Theorem 3.5]{D}
and certainly they have the same defect group.
In conclusion,
$D$ is a defect group of the block $f$.
Let $K_f$ be the stabilizer of $f$ in $K$.
Then blocks of $K_f$ covering $f$ have defect group $D$ by induction.
So is $e$.

\vspace{0.35cm}
Moreover, we claim that
there is a regular $E_2$-orbit of $\mathrm{Irr}_\mathcal{K}(\bar{C}_G(Q),\bar{b}_Q)$,
namely,
$H$ acts transitively on it.

Indeed, since the block $\bar{b}_Q$ of $\bar{H}$ has a cyclic defect group,
  it must be nilpotent.
  By \cite[Theorem 3.13]{P11},
  the block $\bar{b}_Q$ of $\bar{C}_G(Q)$ is basic Morita equivalent to
  its Brauer correspondent.
  Note that the block $\bar{b}_Q$ of $\bar{C}_G(Q)$ is not nilpotent
  since $l>1$.
  This implies that every irreducible Brauer character of
  the block $\bar{b}_Q$ of $\bar{C}_G(Q)$
  can be uniquely lifted to an irreducible ordinary character
  by the theory of cyclic blocks.

  On the other hand,
  since $l_{C_G(Q)}(b_Q)=l$ and $l_H(b_Q)=1$ and
  $H/C_G(Q)\cong E_2$ has order $l$,
  $H$ acts transitively on $\mathrm{IBr}(C_G(Q),b_Q)$.
  Combining this with the argument above,
  there exits a regular $H$-orbit of $\mathrm{Irr}_\mathcal{K}(\bar{C}_G(Q),\bar{b}_Q)$.
 We are done.

        \vspace{2.5mm}
  {\bf Case 3.1.1}~~~Assume that $|M|=1$.

  Then $\mathrm{rank}_\mathcal{O}
  (\mathcal{L}_\mathcal{K}^\circ(\bar{C}_N(Q),\bar{e}_\theta))=1$
  and
  $\mathcal{L}_\mathcal{K}^\circ(\bar{C}_N(Q),\bar{e}_\theta)=
  \mathbb{Z}(\mathrm{Ind}(\xi)-\sum\limits_{i=1}^l\tau_i)$.
  Since there is a regular $E_2$-orbit of $\mathrm{Irr}_\mathcal{K}(\bar{C}_G(Q),\bar{b}_Q)$,
  $\mathrm{Irr}_\mathcal{K}(\bar{C}_G(Q),\bar{b}_Q)=\{\chi_0\}\cap
  \{\chi_1,\chi_2,\cdots,\chi_l\}$
  such that
  $\chi_0$ is $E_2$-stable and
  $E_2$ acts regularly on $\{\chi_1,\chi_2,\cdots,\chi_l\}$.
  Then we have
  $$\bar{\Delta}_Q^\circ(\mathrm{Ind}(\xi)-\sum\limits_{i=1}^l\tau_i)=
  \delta_0\chi_0-\sum\limits_{i=1}^l\delta_i\chi_i$$
  for some $\delta_0,\delta_i\in\{\pm1\},i=1,2,\cdots,l$.
  Since $\bar{\Delta}_Q^\circ$ is $E_2$-stable,
  we have $\delta_1=\delta_2=\cdots=\delta_l=\delta_0$.
  If we  write $\{\tau_1,\tau_2,\cdots,\tau_l\}$
  and
  $\{\chi_1,\chi_2,\cdots,\chi_l\}$
  as
  $\{\tau^a\,|\,a\in E_2\}$
  and
  $\{\chi^a\,|\,a\in E_2\}$
  respectively,
  then we can define a bijective isometry as below
  $$\bar{\Delta}_Q:\mathcal{L}_\mathcal{K}(\bar{C}_N(Q),\bar{e}_\theta)\longrightarrow
  \mathcal{L}_\mathcal{K}(\bar{C}_G(Q),\bar{b}_Q)$$
  $$\mathrm{Ind}(\xi)\mapsto\delta_0\chi_0$$
  $$\tau^a\mapsto\delta_0\chi^a.$$
  It is evident that
  it is an extension of $\bar{\Delta}_Q^\circ$ and
  $E_2$-stable.
  We are done for this case.

          \vspace{2.5mm}
  {\bf Case 3.1.2}~~~Assume that $|M|\geq2$.

  Then there are at least two different $\xi_1,\xi_2\in M$.
  So
  $\mathrm{Ind}(\xi_1)-\mathrm{Ind}(\xi_2)\in
  \mathcal{L}_\mathcal{K}^\circ(\bar{C}_N(Q),\bar{e}_\theta)$
  and
  $\langle\mathrm{Ind}(\xi_1)-\mathrm{Ind}(\xi_2),
  \mathrm{Ind}(\xi_1)-\mathrm{Ind}(\xi_2)\rangle=2$.
  Then there exist
  $\chi_1\neq\chi_2\in
  \mathrm{Irr}_\mathcal{K}(\bar{C}_G(Q),\bar{b}_Q)$ such that
  $$\bar{\Delta}_Q^\circ(\mathrm{Ind}(\xi_1)-\mathrm{Ind}(\xi_2))=
  \delta(\chi_1-\chi_2)$$
  for some $\delta\in\{\pm1\}$.
  Since $\bar{\Delta}_Q^\circ$ is $E_2$-stable,
  we have
  ${^a}(\delta\chi_1-\delta\chi_2)=\delta(\chi_1-\chi_2)$
  for any $a\in E_2$.
  This means that
  $\chi_1$ and $\chi_2$ are both $E_2$-stable.

  If there is a $\xi_3\in M$ different from $\xi_1$ and $\xi_2$,
  then there is a $\chi_3\in\mathrm{Irr}_\mathcal{K}(\bar{C}_G(Q),\bar{b}_Q)$
  different from $\chi_1$ and $\chi_2$
  such that
  $$\bar{\Delta}_Q^\circ(\mathrm{Ind}(\xi_1)-\mathrm{Ind}(\xi_3))=
  \delta\chi_1-\delta\chi_3~\mathrm{or}~
  -\delta\chi_2+\delta\chi_3$$
  and
  $\chi_3$ is $E_2$-stable;
  then we may choose the notation in such a way that
  $$\bar{\Delta}_Q^\circ(\mathrm{Ind}(\xi_1)-\mathrm{Ind}(\xi_2))=
  \delta(\chi_1-\chi_2)
  ~\mathrm{and}~
  \bar{\Delta}_Q^\circ(\mathrm{Ind}(\xi_1)-\mathrm{Ind}(\xi_3))=
  \delta(\chi_1-\chi_3)$$
 for some $E_2$-stable elements
  $\chi_1,\chi_2,\chi_3$ of
  $\mathrm{Irr}_\mathcal{K}(\bar{C}_G(Q),\bar{b}_Q).$

  If $|M|\geq 4$,
  then for any
  $\xi\in M-\{\xi_1,\xi_2,\xi_3\}$,
  there is a unique
  $\chi\in\mathrm{Irr}_\mathcal{K}(\bar{C}_G(Q),\bar{b}_Q)-
  \{\chi_1,\chi_2,\chi_3\}$
  such that
  $$\bar{\Delta}_Q^\circ(\mathrm{Ind}(\xi_1)-\mathrm{Ind}(\xi))=
  \delta(\chi_1-\chi)$$
  and
  $\chi$ is $E_2$-stable.

  In conclusion,
  we have an injective isometry
  $$\Phi:\mathbb{Z}\{\mathrm{Ind}(\xi)\,|\,\xi\in M\}\longrightarrow
  \mathcal{L}_\mathcal{K}(\bar{C}_G(Q),\bar{b}_Q)$$
  mapping $\mathrm{Ind}(\xi)$ to
  $\delta\chi_\xi$
  such that
  $$\Phi(\mathrm{Ind}(\xi)-\mathrm{Ind}(\xi^\prime))=
  \bar{\Delta}_Q^\circ(\mathrm{Ind}(\xi)-\mathrm{Ind}(\xi^\prime))$$
  and
  $\chi_\xi$ is $E_2$-stable
  for any
  $\xi,\xi^\prime\in M$.

  Denote
  $\mathrm{Irr}_\mathcal{K}(\bar{C}_G(Q),\bar{b}_Q)-
  \{\chi_\xi\,|\,\xi\in M\}$
  by $\Omega$.
  Then
  $|\Omega|=l$
  and
  $E_2$ acts on $\Omega$.
  Since there is a regular $E_2$-orbit of
  $\mathrm{Irr}_\mathcal{K}(\bar{C}_G(Q),\bar{b}_Q)$,
  $E_2$ acts regularly on $\Omega$.
  This means that
  $\Omega$ can be represented as
  $\{\chi^a\,|\,a\in E_2\}$
  for some $\chi\in\Omega$.

  Now we fix an element $\xi$ of $M$.
  Suppose that $\chi$ does not get involved in
  $\bar{\Delta}_Q^\circ(\mathrm{Ind}(\xi)-\sum\limits_{i=1}^l\tau_i)$.
  Then there is $\xi^\prime\in M$ such that
  $\langle\chi,\bar{\Delta}_Q^\circ(\mathrm{Ind}(\xi^\prime)-
  \sum\limits_{i=1}^l\tau_i)\rangle\neq 0$
  since $\{\mathrm{Ind}(\xi)-\sum\limits_{i=1}^l\tau_i\,|\,
  \xi\in M\}$ is a
  $\mathbb{Z}$-basis of $\mathcal{L}_\mathcal{K}^\circ
  (\bar{C}_N(Q),\bar{e}_\theta)$.
  Hence,
  $\chi$ has to get involved in
  $\bar{\Delta}_Q^\circ(\mathrm{Ind}(\xi)-\sum\limits_{i=1}^l\tau_i)-
  \bar{\Delta}_Q^\circ(\mathrm{Ind}(\xi^\prime)-\sum\limits_{i=1}^l\tau_i)$
  which is $\delta(\chi_\xi-\chi_{\xi^\prime})$.
  This is impossible.
  So $\chi$ must get involved in
   $\bar{\Delta}_Q^\circ(\mathrm{Ind}(\xi)-\sum\limits_{i=1}^l\tau_i)$
   for any $\xi\in M$.
  Since $\bar{\Delta}_Q^\circ$
  and
  $\mathrm{Ind}(\xi)-\sum\limits_{i=1}^l\tau_i$
  are $E_2$-stable,
  $\chi^a$ has to get involved in
   $\bar{\Delta}_Q^\circ(\mathrm{Ind}(\xi)-\sum\limits_{i=1}^l\tau_i)$
   for any $a\in E_2$ and $\xi\in M$.
   Since
   $\langle\mathrm{Ind}(\xi)-\sum\limits_{i=1}^l\tau_i,
   \mathrm{Ind}(\xi)-\sum\limits_{i=1}^l\tau_i\rangle=1+l$
   and
   $\langle\mathrm{Ind}(\xi)-\sum\limits_{i=1}^l\tau_i,
    \mathrm{Ind}(\xi)-\mathrm{Ind}(\xi^\prime)\rangle=1$,
    $\bar{\Delta}_Q^\circ(\mathrm{Ind}(\xi)-\sum\limits_{i=1}^l\tau_i)=
    \delta\chi_\xi-\sum\limits_{a\in E_2}\delta_a\chi^a~\mathrm{or}~
    -\delta\chi_{\xi^\prime}-\sum\limits_{a\in E_2}\delta_a\chi^a$,
    where $\delta_a\in\{\pm1\}$ for any $a\in E_2$.
    Note that the last situation can happen if and only if
    $|M|=2$.
    By switching $\chi_\xi$ and $\chi_{\xi^\prime}$ if necessary,
    we can assume that
    $\bar{\Delta}_Q^\circ(\mathrm{Ind}(\xi)-\sum\limits_{i=1}^l\tau_i)=
    \delta\chi_\xi-\sum\limits_{a\in E_2}\delta_a\chi^a$.
    Since $\bar{\Delta}_Q^\circ$ is $E_2$-stable and
    $E_2$ acts regularly on $\Omega$,
    $\delta_a$ is equal to $\delta$ for any $a\in E_2$.
   Then we can define an $E_2$-stable bijective isometry as follows
  $$\bar{\Delta}_Q:\mathcal{L}_\mathcal{K}(\bar{C}_N(Q),\bar{e}_\theta)\longrightarrow
  \mathcal{L}_\mathcal{K}(\bar{C}_G(Q),\bar{b}_Q)$$
  $$\mathrm{Ind}(\xi)\mapsto\delta\chi_\xi$$
  $$\tau^a\mapsto\delta\chi^a.$$
  It is clear that $\bar{\Delta}_Q$ is an extension of $\bar{\Delta}_Q^\circ$.

   \vspace{2.5mm}
  {\bf Case 3.2}~~~$\bar{P}_2>1$, namely,
  $Q$ is a non-trivial proper subgroup of $P_2$.

  Then $\mathrm{Ind}(\xi)-\mathrm{Ind}(\xi)\bar{\zeta}\in
  \mathcal{L}_\mathcal{K}^\circ(\bar{C}_N(Q),\bar{e}_\theta)$
  for any $\xi\in M$ and
  $1_{\bar{P}_2}\neq\bar{\zeta}\in
  \mathrm{Irr}_\mathcal{K}(\bar{P}_2)$.

  Now we fix an element $\xi\in M$.
  Since $p$ is odd,
  $|\bar{P}_2|\geq 3$.
  Then there are at least two elements $\bar{\zeta}$ and $\bar{\zeta}^\prime$
  of $\mathrm{Irr}_\mathcal{K}(\bar{P}_2)$
  different from $1_{\bar{P}_2}$.
  With the same argument in the first three paragraphs in Case 3.1.2,
  we can get a subset $\{\chi_\xi,\chi_{\bar{\zeta}}\,|\,
  1_{\bar{P}_2}\neq\bar{\zeta}\in\mathrm{Irr}_\mathcal{K}(\bar{P}_2)\}$ of
  $\mathrm{Irr}_\mathcal{K}(\bar{C}_G(Q),\bar{b}_Q)$
  such that
  $$\bar{\Delta}_Q^\circ(\mathrm{Ind}(\xi)-\mathrm{Ind}(\xi)\bar{\zeta})=
  \delta(\chi_\xi-\chi_{\bar{\zeta}})$$
  for any $1_{\bar{P}_2}\neq\bar{\zeta}\in\mathrm{Irr}_\mathcal{K}(\bar{P}_2)$,
  where $\delta\in\{\pm1\}$.

  Given any $1\neq a\in E_2$ and
  $1_{\bar{P}_2}\neq\bar{\zeta}\in\mathrm{Irr}_\mathcal{K}(\bar{P}_2)$,
  ${^a}(\mathrm{Ind}(\xi)\bar{\zeta})=\mathrm{Ind}(\xi)({^a}\bar{\zeta})$.
  Since $\bar{\Delta}_Q^\circ$ is $E_2$-stable,
  this means
  ${^a}\chi_\xi-{^a}\chi_{\bar{\zeta}}=\chi_\xi-\chi_{{^a}\bar{\zeta}}$.
  Hence,
  we have
  $\chi_\xi$ is $E_2$-stable
  and
  ${^a}\chi_{\bar{\zeta}}=\chi_{{^a}\bar{\zeta}}$.
  On the other hand,
  $$(\mathrm{Ind}(\xi)-\mathrm{Ind}(\xi)\bar{\zeta})\bar{\zeta}=
  (\mathrm{Ind}(\xi)-\mathrm{Ind}(\xi)\bar{\zeta}^2)-
  (\mathrm{Ind}(\xi)-\mathrm{Ind}(\xi)\bar{\zeta}).$$
  Since $\bar{\Delta}_Q^\circ$ is compatible with $\ast$-structure,
  using $\bar{\Delta}_Q^\circ$ on both sides in the above equality,
  we can get
  $$\delta(\chi_\xi-\chi_{\bar{\zeta}})\ast\bar{\zeta}=
  \delta(\chi_{\bar{\zeta}}-\chi_{\bar{\zeta}^2}).$$
  Therefore,
  $\chi_{\bar{\zeta}}=\chi_\xi\ast\bar{\zeta}$ for any
  $1_{\bar{P}_2}\neq\bar{\zeta}\in\mathrm{Irr}_\mathcal{K}(\bar{P}_2)$.

  Suppose that there is another element $\xi^\prime$ of $M$
  different from $\xi$.
  Similarly, we can get a subset
  $\{\chi_{\xi^\prime}\ast\bar{\zeta}\,|\,
  \bar{\zeta}\in\mathrm{Irr}_\mathcal{K}(\bar{P}_2)\}$
  of $\mathrm{Irr}_\mathcal{K}(\bar{C}_G(Q),\bar{b}_Q)$
  such that
  $\bar{\Delta}_Q^\circ(\mathrm{Ind}(\xi^\prime)-
  \mathrm{Ind}(\xi^\prime)\bar{\zeta})=
  \delta^\prime(\chi_{\xi^\prime}-\chi_{\xi^\prime}\ast\bar{\zeta})$
  for any $1_{\bar{P}_2}\neq\bar{\zeta}\in\mathrm{Irr}_\mathcal{K}(\bar{P}_2)$
  and
  $\chi_{\xi^\prime}$ is $E_2$-stable,
  where $\delta^\prime\in\{\pm 1\}$.
  Assume that
  $\{\chi_\xi\ast\bar{\zeta}\,|\,
  \bar{\zeta}\in\mathrm{Irr}_\mathcal{K}(\bar{P}_2)\}\cap
  \{\chi_{\xi^\prime}\ast\bar{\zeta}\,|\,
  \bar{\zeta}\in\mathrm{Irr}_\mathcal{K}(\bar{P}_2)\}\neq\emptyset$.
  Then there is $\bar{\zeta}_0\in\mathrm{Irr}_\mathcal{K}(\bar{P}_2)$
  such that
  $\chi_\xi=\chi_{\xi^\prime}\ast\bar{\zeta}$.
  If $\bar{\zeta}_0=1_{\bar{P}_2}$,
  then $\chi_\xi=\chi_{\xi^\prime}$.
  This implies that
  $\mathrm{Ind}(\xi)-\mathrm{Ind}(\xi)\bar{\zeta}=
  \pm(\mathrm{Ind}(\xi^\prime)-\mathrm{Ind}(\xi^\prime)\bar{\zeta})$
  for any
  $1_{\bar{P}_2}\neq\bar{\zeta}\in\mathrm{Irr}_\mathcal{K}(\bar{P}_2)$.
  This is impossible.
  Then $\bar{\zeta}_0$ is non-trivial.
  But it implies that $\chi_{\xi^\prime}\ast\bar{\zeta}_0^2=\chi_{\xi^\prime}$
  since
  $\langle\mathrm{Ind}(\xi)-\mathrm{Ind}(\xi)\bar{\zeta}_0,
  \mathrm{Ind}(\xi^\prime)-\mathrm{Ind}(\xi^\prime)\bar{\zeta}_0\rangle=0$.
It is well-known that
$\mathrm{Irr}_\mathcal{K}(\bar{P}_2)\backslash\{1_{\bar{P}_2}\}$ acts freely on
irreducible ordinary characters of height zero in the block $\bar{b}_Q$ of $\bar{C}_G(Q)$
(see \cite[\S 1]{R}).
Hence,
$\bar{\zeta}_0^2=1_{\bar{P}_2}$
since the defect group of the block $\bar{b}_Q$ of $\bar{C}_G(Q)$ is cyclic.
But it is impossible because $p$ is odd.
Then
 $$\{\chi_\xi\ast\bar{\zeta}\,|\,
  \bar{\zeta}\in\mathrm{Irr}_\mathcal{K}(\bar{P}_2)\}\cap
  \{\chi_{\xi^\prime}\ast\bar{\zeta}\,|\,
  \bar{\zeta}\in\mathrm{Irr}_\mathcal{K}(\bar{P}_2)\}=\emptyset$$
  for any different $\xi,\xi^\prime\in M$.
 It is clear that $\chi_\xi\ast\bar{\zeta}$ is an irreducible ordinary character
 in the block $\bar{b}_Q$ of $\bar{C}_G(Q)$
 by \cite[Corollary]{BP}.
 Then we get an injective isometry
  $$\Psi:\mathbb{Z}\{\mathrm{Ind}(\xi)\bar{\zeta}\,|\,
  \xi\in M,\bar{\zeta}\in\mathrm{Irr}_\mathcal{K}(\bar{P}_2)\}
  \longrightarrow
  \mathcal{L}_\mathcal{K}(\bar{C}_G(Q),\bar{b}_Q)$$
  mapping $\mathrm{Ind}(\xi)\bar{\zeta}$ to
  $\delta_\xi(\chi_\xi\ast\bar{\zeta})$
  such that
  $\Psi(\mathrm{Ind}(\xi)-\mathrm{Ind}(\xi)\bar{\zeta})=
  \bar{\Delta}_Q^\circ(\mathrm{Ind}(\xi)-\mathrm{Ind}(\xi)\bar{\zeta})$
  and
  $\chi_\xi$ is $E_2$-stable
  for any $\xi\in M$ and
  $1_{\bar{P}_2}\neq\bar{\zeta}\in\mathrm{Irr}_\mathcal{K}(\bar{P}_2)$,
  where $\delta_\xi\in\{\pm 1\}$.

  At the same time,
  $\tau-\tau\bar{\zeta}\in
  \mathcal{L}_\mathcal{K}^\circ(\bar{C}_N(Q),\bar{e}_\theta)$
  for any $\tau\in\mathrm{Irr}_\mathcal{K}(\tilde{E}_1)_\theta$
  and
  $1_{\bar{P}_2}\neq\bar{\zeta}\in\mathrm{Irr}_\mathcal{K}(\bar{P}_2)$.
Take an element $\tau$ of $\mathrm{Irr}_\mathcal{K}(\tilde{E}_1)_\theta$.
  With the same arguments as above,
  we can get an element $\chi_\tau$ of
  $\mathrm{Irr}_\mathcal{K}(\bar{C}_G(Q),\bar{b}_Q)$
  and
  $\delta_\tau\in\{\pm1\}$
  such that
  $\bar{\Delta}_Q^\circ(\tau-\tau\bar{\zeta})=
  \delta_\tau(\chi_\tau-\chi_\tau\ast\bar{\zeta})$
  for any
  $1_{\bar{P}_2}\neq\bar{\zeta}\in\mathrm{Irr}_\mathcal{K}(\bar{P}_2)$.
  Choosing any $1\neq a\in E_2$ and
  $1_{\bar{P}_2}\neq\bar{\zeta}\in\mathrm{Irr}_\mathcal{K}(\bar{P}_2)$,
  since $\bar{\Delta}_Q^\circ$ is $E_2$-stable, we have
  $$\delta_{{^a}\tau}(\chi_{{^a}\tau}-\chi_{{^a}\tau}\ast{^a}\bar{\zeta})=
  \bar{\Delta}_Q^\circ({^a}\tau-{^a}\tau({^a}\bar{\zeta}))=
  {^a}(\bar{\Delta}_Q^\circ(\tau-\tau\bar{\zeta}))=
  \delta_\tau({^a}\chi_\tau-{^a}\chi_\tau\ast{^a}\bar{\zeta}).$$
  Then $\chi_{{^a}\tau}={^a}\tau$ or
  $\chi_{{^a}\tau}={^a}\chi_\tau\ast{^a}\bar{\zeta}$.
  If $\chi_{{^a}\tau}={^a}\chi_\tau\ast{^a}\bar{\zeta}$,
  then
  ${^a}\chi_\tau=\chi_{{^a}\tau}\ast{^a}\bar{\zeta}$.
  Therefore,
  $\chi_{{^a}\tau}=\chi_{{^a}\tau}\ast{^a}(\bar{\zeta}^2)$,
  which is impossible.
  Hence,
  $\chi_{{^a}\tau}={^a}\chi_\tau$
  and
  $\delta_{{^a}\tau}=\delta_\tau$
  for any $a\in E_2$
  since $E_2$ acts transitively on $\mathrm{Irr}_\mathcal{K}(\tilde{E}_1)_\theta$.
  And we denote $\delta_\tau$ by $\delta$.
  By the facts that
  $\langle\tau-\tau\bar{\zeta},
   \tau^\prime-\tau^\prime\bar{\zeta}^\prime\rangle=0$
   and
  $\langle\mathrm{Ind}(\xi)-\mathrm{Ind}(\xi)\bar{\zeta},
  \tau-\tau\bar{\zeta}^\prime\rangle=0$
  for any
  $\tau\neq\tau^\prime\in\mathrm{Irr}_\mathcal{K}(\tilde{E}_1)_\theta$
  and
  $\xi\in M$
  and
  $\bar{\zeta},\bar{\zeta}^\prime\in
  \mathrm{Irr}_\mathcal{K}(\bar{P}_2)-\{1_{\bar{P}_2}\}$,
  we can get
  $$\{\chi_\tau\ast\bar{\zeta}\,|\,
  \bar{\zeta}\in\mathrm{Irr}_\mathcal{K}(\bar{P}_2)\}\cap
  \{\chi_{\tau^\prime}\ast\bar{\zeta}\,|\,
  \bar{\zeta}\in\mathrm{Irr}_\mathcal{K}(\bar{P}_2)\}=\emptyset$$
  and
  $$\{\chi_\xi\ast\bar{\zeta}\,|\,
  \xi\in M, \bar{\zeta}\in\mathrm{Irr}_\mathcal{K}(\bar{P}_2)\}\cap
  \{\chi_\tau\ast\bar{\zeta}\,|\,
  \tau\in\mathrm{Irr}_\mathcal{K}(\tilde{E}_1)_\theta,
  \bar{\zeta}\in\mathrm{Irr}_\mathcal{K}(\bar{P}_2)\}=\emptyset.$$
  Hence,
  we have a well-defined $E_2$-stable bijective isometry as below
  $$\bar{\Delta}_Q:\mathcal{L}_\mathcal{K}(\bar{C}_N(Q),\bar{e}_\theta)\longrightarrow
  \mathcal{L}_\mathcal{K}(\bar{C}_G(Q),\bar{b}_Q)$$
  $$\mathrm{Ind}(\xi)\bar{\zeta}\mapsto\delta_\xi\chi_\xi\ast\bar{\zeta}$$
  $${^a}\tau\bar{\zeta}\mapsto\delta{^a}\chi_\tau\ast\bar{\zeta}.$$
  It suffices to show that $\bar{\Delta}_Q$ is an extension of $\bar{\Delta}_Q^\circ$,
  namely,
  $$\bar{\Delta}_Q^\circ(\mathrm{Ind}(\xi)-\sum\limits_{i=1}^l\tau_i)=
  \delta_\xi\chi_\xi-\delta\sum\limits_{i=1}^l\chi_{\tau_i}$$
  for any $\xi\in M$.

  Choose an element $\xi$ of $M$.
  Since
  $\langle\mathrm{Ind}(\xi)-\sum\limits_{i=1}^l\tau_i,
  \tau-\tau\bar{\zeta}\rangle=-1$,
  then at least $\chi_\tau$ and $\chi_\tau\ast\bar{\zeta}$ must get involved in
  $\bar{\Delta}_Q^\circ(\mathrm{Ind}(\xi)-\sum\limits_{i=1}^l\tau_i)$
  for any $\tau\in\mathrm{Irr}_\mathcal{K}(\tilde{E}_1)_\theta$
  and
  $\bar{\zeta}\in\mathrm{Irr}_\mathcal{K}(\bar{P}_2)-\{1_{\bar{P}_2}\}$.

  Keep the notation as above.
  Suppose that there are $\tau$ and $\bar{\zeta}$ such that
  $\chi_\tau\ast\bar{\zeta}$ gets involved in
  $\bar{\Delta}_Q^\circ(\mathrm{Ind}(\xi)-\sum\limits_{i=1}^l\tau_i)$.
  Since
  $\langle\mathrm{Ind}(\xi)-\sum\limits_{i=1}^l\tau_i,
  \tau\bar{\zeta}-\tau\bar{\zeta}^\prime\rangle=0$
  for any $\bar{\zeta}^\prime\in\mathrm{Irr}_\mathcal{K}(\bar{P}_2)$
  different from $\bar{\zeta}$ and $1_{\bar{P}_2}$,
  $\chi_\tau\ast\bar{\zeta}$ must get involved in
  $\bar{\Delta}_Q^\circ(\mathrm{Ind}(\xi)-\sum\limits_{i=1}^l\tau_i)$
  for any $\bar{\zeta}$.
  At the same time,
  since $\bar{\Delta}_Q^\circ$ is $E_2$-stable
  and
  $\mathrm{Ind}(\xi)-\sum\limits_{i=1}^l\tau_i$ is $E_2$-stable,
  we have
  ${^a}(\chi_\tau\ast\bar{\zeta})$ must get involved in
  $\bar{\Delta}_Q^\circ(\mathrm{Ind}(\xi)-\sum\limits_{i=1}^l\tau_i)$
  for any $a\in E_2$ and $\bar{\zeta}$.
  Then there are at least
  $l\cdot(|\bar{P}_2|-1)$ different irreducible characters involved in
  $\bar{\Delta}_Q^\circ(\mathrm{Ind}(\xi)-\sum\limits_{i=1}^l\tau_i)$.
  This is impossible since
  $\langle\mathrm{Ind}(\xi)-\sum\limits_{i=1}^l\tau_i,
  \mathrm{Ind}(\xi)-\sum\limits_{i=1}^l\tau_i\rangle=1+l$
  and $|\bar{P}_2|-1\geq 2$ and $l>1$.

  So for any $\xi\in M$,
  $\bar{\Delta}_Q^\circ(\mathrm{Ind}(\xi)-\sum\limits_{i=1}^l\tau_i)=
  a_\chi\chi-\delta\sum\limits_\tau\chi_\tau$.
  Here,
  $a_\chi\in\{\pm 1\}$ and
  $\chi$ is an element of
  $\mathrm{Irr}_\mathcal{K}(\bar{C}_G(Q),\bar{b}_Q)-
  \{\chi_\tau\,|\,
  \tau\in\mathrm{Irr}_\mathcal{K}(\tilde{E}_1)_\theta\}$.
  Since
  $\langle\mathrm{Ind}(\xi)-\sum\limits_{i=1}^l\tau_i,
  \mathrm{Ind}(\xi)-\mathrm{Ind}(\xi)\bar{\zeta}\rangle=1$
  for any $\bar{\zeta}\neq 1_{\bar{P}_2}$,
  we have
  $a_\chi=\delta_\xi$
  and
  $\chi=\chi_\xi$.
  We are done.
  \end{proof}

  Then the proof of Theorem \ref{MT} will follow by
  Theorem \ref{MT'} and \cite[3.4.2]{PU}.

\end{document}